\renewcommand{\phi}{\varphi}
\renewcommand{\epsilon}{\varepsilon}
\newcommand{\numberset}{\mathbb}
\newcommand{\e}{\varepsilon}
\newcommand{\R}{\numberset{R}}
\newcommand{\Om}{\Omega}
\theoremstyle{definition}
\newtheorem{definition}{Definition}[section]
\theoremstyle{definition}
\newtheorem{definizione}{Definizione}[section]
\theoremstyle{definition}
\newtheorem{rmk}[definizione]{Remark}
\theoremstyle{plain}
\newtheorem{thm}[definizione]{Theorem}
\theoremstyle{plain}
\newtheorem{prop}[definizione]{Proposition}
\theoremstyle{plain}
\newtheorem{lemma}[definizione]{Lemma}
\theoremstyle{plain}
\newtheorem{cor}[definizione]{Corollary}
\theoremstyle{definition}
\numberwithin{equation}{section}
\begin{document}
\title[An extension of Cabr\'{e}-Chanillo..]{An extension of Cabr\'{e}-Chanillo theorem to the $p$-laplacian}
\thanks{Massimo Grossi was supported by INDAM-GNAMPA. Zexi Wang was supported by the China Scholarship Council (202406990064), and Chongqing Graduate Student Research Innovation Project (CYB25100).}

\author[M. Grossi]{Massimo Grossi}
\address[Massimo Grossi] {Dipartimento di Scienze di Base e Applicate per 1'Ingegneria, Universit\`{a} di Roma ``La Sapienza", Via Scarpa 10, 00161 Roma, Italy.}
\email{massimo.grossi@uniroma1.it}

\author[L. Montoro]{Luigi Montoro}
\address[Luigi Montoro] {Dipartimento di Matematica e Informatica, Universit\`{a} della Calabria
 Ponte Pietro Bucci 31B, 87036 Arcavacata di Rende, Cosenza, Italy.}
\email{luigi.montoro@unical.it}

\author[B. Sciunzi]{Berardino Sciunzi}
\address[Berardino Sciunzi] {Dipartimento di Matematica e Informatica, Universit\`{a} della Calabria
 Ponte Pietro Bucci 31B, 87036 Arcavacata di Rende, Cosenza, Italy.}
\email{berardino.sciunzi@unical.it }



\author[Z. Wang]{Zexi Wang}
\address[Zexi Wang] {School of Mathematics and Statistics, Southwest University,
Chongqing, 400715, People's Republic of China.}\email{zxwangmath@163.com}

\maketitle

\begin{abstract}
{ In this paper, we study the critical points of stable solutions for the following $p$-laplacian equation
\begin{equation*}
\begin{cases}
-div\big(|\nabla u|^{p-2}\nabla u\big)=f(u)&in\ \Om,\\
u>0&in\ \Om,\\
u=0&on\ \partial\Om,
\end{cases}
\end{equation*}
where $p>2$, $f\in C^1([0,+\infty))$ satisfies $f(t)>0$ for $t>0$,  and $\Om\subset\R^2$ is a smooth bounded domain with non-negative curvature of the boundary.
Via a suitable approximation argument, we prove that, a stable solution $u$ admits, as its only critical point, the internal absolute maxima and possibly saddle points with zero index. Moreover, $Argmax(u)$ is a point or segment.
 }

\vspace{.2cm}
\emph{\bf Keywords:} $p$-laplacian equation; Critical points; Stable solutions.

\vspace{.2cm}
\emph{\bf 2020 Mathematics Subject Classification:} 35J92; 35B38; 35B09.

\end{abstract}

\section{Introduction and statement of the main result}
In this paper, we study the following quasi-linear problem
\begin{equation}\label{1}
\begin{cases}
-div\big(|\nabla u|^{p-2}\nabla u\big)=f(u)&in\ \Om,\\
u>0&in\ \Om,\\
u=0&on\ \partial\Om,
\end{cases}
\end{equation}
where $p>1$, $\Om\subset\R^2$ is a smooth bounded domain and $f$ is a suitable reaction.

For $p=2$, in the case of $f \equiv1$, problem \ref{1} is called the {torsion problem}. Makar-Limanov \cite{M} proved that if $\Om$ is convex, then the solution $u$ of \ref{1} has only one critical point and the level sets of $u$ are strictly convex. The convexity assumption is difficult to relax, indeed in \cite{GG}, the authors gave some examples of domains ``close to'' (in a suitable sense) a convex one with a large number of critical points. Moreover, this result is sharp in terms of the nonlinearity, since Hamel et al. \cite{HNS} constructed a solution which is not quasi-concave with a more general nonlinearity $f$ in a ``like-stadium'' domain. Here we call a function is {quasi-concave} if its super-level sets are all
convex.
The result of \cite{M} has been extended to any dimension by Korevaar and Lewis \cite{KL}.
More recently, Gallo and Squassina \cite{GS} generalized the result of \cite{M} to the case of 
a sequence of nonlinearities $f_n(x)$ satisfying $f_n(x)\rightarrow f_\infty$ in $\Om$ as $n\rightarrow\infty$, where $f_\infty$ is a positive constant.

Concerning the {eigenvalue problem}, which means $f(u)=\lambda u$, $\lambda$ is the first eigenvalue of the Laplacian with zero Dirichlet boundary condition.
Here it was proved by Brascamp and Lieb \cite{BL} and Acker et al. \cite{APP} in dimension $N=2$ that if $\Om\subset {\R}^N$ is strictly convex, then the
first eigenfunction $u$ is log-concave, that is $\log u$ is concave.
Moreover, Caffarelli and Friedman \cite{CF2} obtained the uniqueness and non-degeneracy of the critical point in dimension two.
For more investigations in this case, we can see \cite{DG} for the number of critical points of the second eigenfunction
in convex planar domains.

Now we consider the case of a general nonlinearity $f$. Gidas et al. \cite{GNN} proved the uniqueness and non-degeneracy of the critical point under the assumption that $\Om\subset \mathbb{R}^N$ ($N\geq 3$) is symmetric with respect to a point and just convex in any direction.  Since then, some conjectures have claimed that the uniqueness of the critical point holds in more general convex domains without the symmetry assumption. A good class of solutions to extend the result of \cite{GNN} is that of the semi-stable solutions. We recall that $u$ is a semi-stable solution of problem \ref{1} if the first eigenvalue of the linearized operator $-\Delta- f'(u)$ in $\Om$ is non-negative, or equivalently if
\begin{equation*}
\int_{\Omega}|\nabla \phi|^2-f'(u)\phi^2\geq 0,
\end{equation*}
for any $\phi \in C_0^\infty(\Omega)$.
 An
important result concerning this class of solutions is given by Cabr\'{e} and Chanillo \cite{CC}, where they proved the uniqueness and non-degeneracy of the critical point of semi-stable solutions in convex planar domains with  boundary of positive curvature. This result was extended to the case of vanishing curvature in \cite{DGM} by degree theory. 
We also mention  the recent papers  \cite{DG24} and \cite{GP}  where the authors considered the uniqueness and non-degeneracy of the critical point for semi-stable solutions on convex domains of Robin boundary and Riemannian surfaces, respectively.

However, if $p\neq 2$, 
the $p$-laplace operator is degenerate ($p>2$) or singular ($1<p<2$) in the critical set
\begin{equation*}
  \mathcal{Z}=\{x\in\Om:\nabla u(x)=0\}.
\end{equation*}
The solutions of \ref{1} are generally of class $C^{1,\alpha}$ with $\alpha\in (0,1)$, not $C^{2}$, and solve \ref{1} only in the weak sense, see \cite{Di,Li,T}.
This is
the best regularity that one can expect for solutions to nonlinear equations involving
the $p$-Laplacian.
Thus the above techniques cannot be directly applied to problem \ref{1} when $p\neq 2$. A classical idea, thus, is to regularize the operator, apply the result and pass to the limit. 
This is what has been done in \cite{S} to prove the concavity properties of solutions to \ref{1} with $f(u)=1$ and $f(u)=u^{p-1}$. For more classical results regarding the regularized procedure, the readers may refer to \cite{ACF,CDS,CFV1,GS} and the references therein. 

 When $\Omega$ is a ball, by performing the moving plane, Damascelli and Sciunzi \cite{DS} proved the uniqueness and non-degeneracy of the critical point for the weak solutions.


As far as we know, there are only a few works dealing with the critical points of solutions to \ref{1} with a general reaction term in general convex domains when $p\neq2$, besides the one \cite{BMS2},
where the authors proved
 the uniqueness of the critical point of quasi-concave solutions to \ref{1} in convex bounded domains of the plane.
 So, inspired by the work  \cite{CC} and \cite{DGM}, in this paper, we are concerned with the critical points of the {\em stable solutions} of \ref{1}.
 Moreover, we are particularly interested in the case $p>2$.

Before stating our main result, we recall the definition of stable solutions for $p$-laplace equations.
  \begin{definition}\label{defi}\cite[Definition 1.1]{CS'} 
  Let $u\in C^{1}(\bar{\Om})$ be a weak solution  of \ref{1} with $p>2$, we say that $u$ is {stable}, 
  if the first eigenvalue of the linearized operator $L(u)$ in $\Om$ is positive, or equivalently if
   \begin{equation*}
   \langle L(u)\varphi,\phi\rangle >0, 
\end{equation*}
for any $\varphi\in W_0^{1,p}(\Om)\backslash \{0\}$, 
where $L(u)$ is defined by, for any  $\varphi,\psi\in W_0^{1,p}(\Om)$,
\begin{equation*}
\langle L(u)\varphi,\psi\rangle=\int_{\Om}|\nabla u|^{p-2}(\nabla \phi\nabla\psi)+(p-2)|\nabla u|^{p-4}(\nabla u\nabla \phi)(\nabla u\nabla\psi)-f'(u)\phi\psi.
\end{equation*}
  \end{definition}
Denote the associated energy functional of \ref{1}  by $J:W_0^{1,p}(\Om)\to\R$,
\begin{equation*}
  J(v)=\int_\Omega\frac{|\nabla v|^p}{p}- F(v),
\end{equation*}
where $F(s)=\int_0^sf(t)dt$. 
Then $J$ is well defined in $W_0^{1,p}(\Om)$, and of class $C^2$. Moreover,
it's easy to find that $\langle L(u)\varphi,\phi\rangle$ is the second variation of $J$ at $u$.




Our first result states as follows.
\begin{thm}\label{th1}
Assume   
$f\in C^1([0,+\infty))$ satisfies $f(t)>0$ for $t>0$,
and 
$\Om\subset\R^2$ is a smooth bounded domain whose  boundary has {positive curvature}.
Suppose that $u$ 
is a stable solution of problem \ref{1}.
Then the critical points of $u$ has only the internal absolute maxima and possibly saddle points with zero index. Moreover, $Argmax(u)$ is a point or {segment}.
\end{thm}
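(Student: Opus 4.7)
The plan is to use the approximation strategy advertised in the abstract: replace the degenerate $p$-laplacian by a non-degenerate quasilinear operator, apply the Cabr\'e--Chanillo theorem to the smooth approximants, and then pass to the limit. I would organize the argument in three stages plus a closing topological step.

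\emph{Stage 1 (Regularization).} I would consider, for $\varepsilon>0$, the Dirichlet problem
\[
-\operatorname{div}\bigl((\varepsilon^2+|\nabla u_\varepsilon|^2)^{(p-2)/2}\nabla u_\varepsilon\bigr)=f(u_\varepsilon) \text{ in } \Omega,\qquad u_\varepsilon=0 \text{ on }\partial\Omega,
\]
whose operator is uniformly elliptic. Using the results in \cite{S,CDS,CFV1} one obtains positive solutions $u_\varepsilon\in C^{2,\alpha}(\overline{\Omega})$ with $u_\varepsilon\to u$ in $C^1(\overline{\Omega})$. The crucial point at this stage is to check that stability transfers from $u$ to $u_\varepsilon$: since the second variation of the regularized energy functional is continuous in $\varepsilon$ and Definition~\ref{defi} controls the lowest eigenvalue of $L(u)$, choosing $u_\varepsilon$ as a variational solution close to $u$ should give, for $\varepsilon$ small, a strictly positive first eigenvalue for the regularized linearized operator $L_\varepsilon(u_\varepsilon)$.

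\emph{Stage 2 (Cabr\'e--Chanillo for $u_\varepsilon$).} Now $L_\varepsilon$ is uniformly elliptic in divergence form with $C^{0,\alpha}$ coefficients, and $\partial_{x_i}u_\varepsilon$ satisfies the linearized equation $L_\varepsilon(\partial_{x_i}u_\varepsilon)=0$ in $\Omega$. I would then adapt the Cabr\'e--Chanillo argument \cite{CC}, with the variant \cite{DGM} allowing vanishing curvature, to this quasilinear but smooth setting. Together with $|\nabla u_\varepsilon|>0$ on $\partial\Omega$ (Hopf) and the positive boundary curvature, this yields that $u_\varepsilon$ has a unique non-degenerate critical point (its maximum), and consequently is quasi-concave.

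\emph{Stage 3 (Passage to the limit).} From $u_\varepsilon\to u$ in $C^1$, the super-level sets $\{u\ge t\}$ are limits of the convex sets $\{u_\varepsilon\ge t\}$ for regular $t<\max u$, so $u$ is quasi-concave. Hence $\operatorname{Argmax}(u)$ is a compact convex subset of $\mathbb{R}^2$; since $f(u)>0$ forces the equation to preclude $u$ being constant on any open set, $\operatorname{Argmax}(u)$ has empty interior and must be a point or a segment. For any critical point $x_0$ of $u$ with $u(x_0)<\max u$, $x_0$ cannot be the limit of the (unique, maximum-type) critical points of $u_\varepsilon$, so the Brouwer degree of $\nabla u_\varepsilon$ on any sufficiently small ball around $x_0$ is zero for small $\varepsilon$, and by $C^1$-convergence the local index of $\nabla u$ at $x_0$ is zero. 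Combined with $u(x_0)<\max u$ and the quasi-concavity of $u$, $x_0$ cannot be an interior maximum, so it is a saddle with vanishing local index, as claimed.

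\emph{Main obstacle.} I expect Stage~2 to be the hardest part. The original Cabr\'e--Chanillo argument exploits the harmonicity of $\partial_{x_i}u$; here the analogous identity involves the full operator $L_\varepsilon$, whose anisotropic piece $(p-2)(\varepsilon^2+|\nabla u_\varepsilon|^2)^{(p-4)/2}\nabla u_\varepsilon\otimes\nabla u_\varepsilon$ alters both the ellipticity constant and the boundary integrals produced by integration by parts. One must verify that the positive curvature assumption still yields the correct sign in these modified boundary terms, uniformly in $\varepsilon$, and that the stability inequality of Stage~1 gives the required rigidity when tested against the partial derivatives $\partial_{x_i}u_\varepsilon$. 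Producing this quantitative version of Cabr\'e--Chanillo for the regularized linearization, and tracking its constants as $\varepsilon\to 0$, is the technical core of the proof.
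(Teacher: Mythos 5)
Your overall strategy coincides with the paper's: regularize with $\big(\varepsilon^2+|\nabla u|^2\big)^{\frac{p-2}{2}}$, prove a Cabr\'e--Chanillo type uniqueness and non-degeneracy statement for the smooth approximants $u_\varepsilon$, and pass to the limit. Stage 1 is done in the paper by the implicit function theorem applied to $L(\varepsilon,v)=J'_\varepsilon(v)$: the stability of $u$ makes $\partial_v L(0,u)$ invertible, which yields at once existence, uniqueness, the $C^1(\bar\Om)$ convergence and the stability of $u_\varepsilon$; your ``variational solution close to $u$'' is vaguer but in the same spirit. Stage 2, which you rightly call the core and leave as an obstacle, is exactly what Section 3 carries out: nodal sets $N_\theta$ of the directional derivatives, unique continuation and Bers' expansion for the (uniformly elliptic, since $\varepsilon\neq 0$) linearized equation, the stability of $u_\varepsilon$ excluding enclosed nodal domains, and a degree argument for the map $T$ built from second derivatives, where the positive boundary curvature gives $\deg(T,\Om,\mathbf{0})=1$.

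The genuine gap is the sentence ``and consequently is quasi-concave'' and everything in Stage 3 that rests on it. Uniqueness plus non-degeneracy of the critical point of $u_\varepsilon$ does not imply convexity of all of its super-level sets: a level curve can be non-convex while the gradient never vanishes on it, and the counterexample of \cite{HNS} recalled in the introduction shows that solutions in convex-like domains need not be quasi-concave; neither \cite{CC} nor this paper claims quasi-concavity of $u_\varepsilon$ or of $u$. Without it, your deduction that the sets $\{u\ge t\}$ are convex, hence that $Argmax(u)$ is convex, and your exclusion of interior relative maxima below the level $\|u\|_\infty$, are unsupported. The paper's route avoids this: non-degeneracy of the maximum of $u_\varepsilon$ gives strict concavity only on a small top super-level set $U_\iota$, and passing to the limit on these sets yields convexity of $Argmax(u)$; combined with the fact that $Argmax(u)$ has zero Lebesgue measure (\cite{L}, \cite{ACF} -- or your own correct remark that $f>0$ forbids $u$ to be constant on an open set), convexity forces a point or a segment. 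Relative minima and relative maxima strictly below the maximum are excluded directly by $C^0$ comparison of $u_\varepsilon$ with $u$ (they would create a second critical point of $u_\varepsilon$), and only isolated saddles of nonzero index are excluded by the degree-stability argument you also invoke; note that your index argument needs the critical point to be isolated, so plateau-type critical sets must be handled separately, as the paper does with its minima/maxima sets.
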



\begin{rmk}\label{rmk1}
{\rm In this paper, applying the implicit function theorem to a suitable operator $L(\varepsilon,v)$ in the space $E$, see \ref{space}-\ref{3}, for any fixed $\varepsilon\in (-\varepsilon_0,\varepsilon_0)$,
we find a function $u_\varepsilon\in C^1(\bar{\Om})$ such that $L(\varepsilon,u_\varepsilon)=0$.  
 Moreover, $u_\varepsilon\rightarrow u$ in $C^1(\bar{\Om})$ as $\varepsilon\rightarrow0$. By the defintion of $L(\varepsilon,v)$ and using some standard regularity results, we have that $u_\varepsilon\in C^{2,\alpha}(\bar{\Om})$ is a positive stable solution for
some Dirichlet boundary problem \ref{2}.
Then, considering the critical points of $u_\varepsilon$ and passing to the limit, we
complete the proof.
}
\end{rmk}


\begin{rmk}
{\rm
As mentioned in Remark \ref{rmk1}, since we want to use the implicit function theorem to construct an
approximation solution $u_\varepsilon$, the first eigenvalue of $L(u)$ in $\Om$ must be non-zero. 
Moreover, to obtain
a stable solution $u_\varepsilon$, we cannot start from $u$ with the first eigenvalue negative, otherwise, the first eigenvalue of the linearized operator $L_\varepsilon(u_\varepsilon)$ defined in \ref{diff} for the approximating problem could become negative.}
\end{rmk}

Moreover, we have the following result allowing the curvature of $\partial \Om$ to vanish somewhere.
\begin{thm}\label{th2'}
Assume 
$f\in C^1([0,+\infty))$ satisfies $f(t)>0$ for $t>0$,
and 
$\Om\subset\R^2$ is a smooth bounded domain whose  boundary has {non-negative curvature} such that the subset of zero-curvature consists of isolated points or segments.
Suppose that $u$ 
is a stable solution of problem \ref{1}.
Then the critical points of $u$ has only the internal absolute maxima and possibly saddle points with zero index. Moreover, $Argmax(u)$ is a point or {segment}.
\end{thm}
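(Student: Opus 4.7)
The approach is to follow the proof of Theorem \ref{th1} line by line, replacing only the final application of the classical Cabr\'e--Chanillo theorem \cite{CC} on the approximating solution by its extension from \cite{DGM}, whose hypotheses allow the curvature of $\partial\Omega$ to vanish exactly in the way permitted by the present statement.

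Concretely, one first carries out the construction sketched in Remark \ref{rmk1}. Applying the implicit function theorem to the operator $L(\varepsilon,v)$ on the space $E$ at the point $(0,u)$ --- which is possible because $u$ is strictly stable in the sense of Definition \ref{defi}, so that $D_v L(0,u)$ is invertible --- one obtains, for $\varepsilon\in(-\varepsilon_0,\varepsilon_0)$, a family $u_\varepsilon$ solving $L(\varepsilon,u_\varepsilon)=0$ with $u_\varepsilon\to u$ in $C^1(\bar{\Omega})$. Because the regularized operator is uniformly elliptic, $u_\varepsilon\in C^{2,\alpha}(\bar{\Omega})$ is a positive, classically semi-stable solution of the regularized semilinear Dirichlet problem \ref{2}. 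This entire construction is identical to the one used for Theorem \ref{th1} and does not use the curvature condition on $\partial\Omega$.

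Next, we apply \cite{DGM} to $u_\varepsilon$. Its hypotheses --- a planar smooth bounded domain with non-negative boundary curvature whose zero set is a union of isolated points and segments, together with semi-stability of a classical semilinear solution --- match ours exactly, so we conclude that $u_\varepsilon$ has a unique non-degenerate interior critical point. The final stage is the limit $\varepsilon\to 0$, again identical to the one for Theorem \ref{th1}: the unique critical point of $u_\varepsilon$ accumulates either at a single interior point or along an interior segment (giving $\mathrm{Argmax}(u)$), and every further critical point of $u$ must lie inside the degenerate set $\mathcal{Z}$ and be a saddle of index zero.

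The delicate point is this last step, and in particular preventing the interior critical points of $u_\varepsilon$ from drifting to $\partial\Omega$ under the $C^1$-limit. The non-negativity of the curvature enters here through Hopf-type boundary estimates for $u_\varepsilon$ that are uniform in $\varepsilon$: near points where the curvature is strictly positive one uses the same estimates as in \cite{CC}, while on the isolated zeros or segments of zero curvature one imports the quantitative boundary control from \cite{DGM} --- both of which are stable under the $C^1$-approximation. With the critical points confined to the interior, the remaining analysis of the critical set of $u$ proceeds exactly as in Theorem \ref{th1}.
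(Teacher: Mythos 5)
There is a genuine gap at the central step. You propose to conclude the analysis of the approximating solution $u_\varepsilon$ by quoting the theorem of \cite{DGM} directly, on the grounds that $u_\varepsilon$ is ``a classical semi-stable solution of the regularized \emph{semilinear} Dirichlet problem''. But problem \ref{2} is not semilinear: $u_\varepsilon$ solves $-\mathrm{div}\big[(\varepsilon^2+|\nabla u_\varepsilon|^2)^{\frac{p-2}{2}}\nabla u_\varepsilon\big]=f(u_\varepsilon)$, a uniformly elliptic but genuinely quasilinear equation, whereas both \cite{CC} and \cite{DGM} are stated for $-\Delta u=f(u)$. Their hypotheses therefore do not ``match exactly'', and the black-box citation fails. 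This is precisely why the paper does not simply invoke \cite{CC}/\cite{DGM} even in the positive-curvature case (Theorem \ref{th2}): it rebuilds the whole machinery for the regularized operator --- the directional derivatives $u_{\varepsilon,\theta}$ satisfy the linear equation \ref{f1} with coefficient matrix $A(x)$, for which uniform ellipticity \ref{eq12} and Lipschitz continuity \ref{eq3} must be verified before unique continuation and Bers' theorem can be used (Propositions \ref{prop1}, \ref{prop2}, \ref{prop3}), and the degree of the map $T$ built from second derivatives of $u_\varepsilon$ must be computed (Lemmas \ref{lem1}--\ref{lem3}, Corollary \ref{coro}). In the zero-curvature case the paper additionally needs the new Lemmas \ref{auxi} and \ref{auxi'}, showing $\mathfrak{R}_{x_2}(\mathbf{0})<0$ when $u_{\varepsilon,x_1x_2}(\mathbf{0})=0$, in order to make the homotopy admissible on the excised domain $\Omega_\rho$ when $T(\mathbf{0})=\mathbf{0}$ occurs at the flat boundary point (Theorems \ref{th3} and \ref{th4}); these arguments are adaptations of the \emph{ideas} of \cite{DGM} to the $\varepsilon$-regularized quasilinear setting, not applications of its statement.

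A secondary inaccuracy: the ``delicate point'' you identify --- preventing critical points of $u_\varepsilon$ from drifting to $\partial\Omega$ in the limit $\varepsilon\to 0$ --- is not where the curvature hypothesis enters. In the paper, interiority of the critical set of $u$ follows simply from the Hopf lemma applied to $u$ (and to $u_\varepsilon$), with no curvature-dependent uniform estimate; the curvature is used only in the degree computation for $T$ at the level of the approximating problem. The limit step itself (exclusion of minima and of relative maxima below $\|u\|_\infty$, saddle points of index zero, and the convexity argument showing $Argmax(u)$ is a point or a segment via the measure-zero result of \cite{L} or \cite{ACF}) is indeed the same as in Theorem \ref{th1}, as you say, but the heart of the proof of Theorem \ref{th2'} is the quasilinear analogue of the Cabr\'e--Chanillo/De Regibus--Grossi--Mukherjee analysis for $u_\varepsilon$, which your proposal leaves unproved.
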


The paper is organized as follows. 
In Section \ref{sec2}, we 
approximate $u$ with $u_\varepsilon$. Section \ref{sec3} is devoted to the proof of Theorem \ref{th1}. Finally, in Section \ref{sec4}, we prove Theorem \ref{th2'}.

\section{The approximation argument}\label{sec2}
In this section, we will use the implicit function theorem to construct an approximation solution to problem \ref{1}.
Since $u$ is a weak solution of \ref{1}, by definition, we have $J'(u)=0$, where $J'$ is the differential of $J$, that is,
\begin{equation*}
  \langle J'(u),\varphi\rangle=\int_\Om|\nabla u|^{p-2}(\nabla u\nabla \phi)- f(u)\phi,\quad \text{for any $\varphi\in W_0^{1,p}(\Om)$}.
\end{equation*}
 For any $\varepsilon>0$ small enough,  
and $v$ in a $W_0^{1,p}$-neighborhood of $u$ (if $v=u$, set $\varepsilon=0$), i.e., there exist  $\varepsilon_0,\delta>0$ such that $\varepsilon\in (-\varepsilon_0,\varepsilon_0)$ and $\|v-u\|_{W_0^{1,p}(\Om)}< \delta$.   
Define
\begin{align}\label{space}
  E:=\big\{(\varepsilon,w)\in \mathbb{R}\times W_0^{1,p}(\Om):\varepsilon\in (-\varepsilon_0,\varepsilon_0),\|w-u\|_{W_0^{1,p}(\Om)}< \delta\big\},
\end{align}
we consider the operator $L(\varepsilon,v):E\rightarrow W^{-1,p'}(\Om)$ defined by
\begin{equation}\label{3}
  L(\varepsilon,v):=J'_\varepsilon(v),
\end{equation}
where $p'$ is the conjugate exponent of $p$, $J'_\varepsilon$ is the differential of the $C^2$-functional $J_\e:W_0^{1,p}(\Om)\to\R$,
\begin{equation*}
  J_\varepsilon(v)=
  \int_\Omega\frac{\left(\e^2+|\nabla v|^2\right)^\frac p2}{p}- F(v),
\end{equation*}
namely,
\begin{equation*}
  \langle J'_\varepsilon(v),\varphi\rangle=\int_\Om\big(\e^2 +|\nabla v|^2\big)^\frac{p-2}2(\nabla v\nabla \phi)-f(v)\phi,\quad \text{for any $\varphi\in W_0^{1,p}(\Om)$}.
\end{equation*}




For \ref{space}-\ref{3}, we have the following existence result which is very important in this paper.

\begin{prop}\label{toget}
Under the assumptions of Theorem \ref{th1} or \ref{th2'}, for any fixed $\varepsilon\in (-\varepsilon_0,\varepsilon_0)$, problem
 \begin{equation}\label{2}
\begin{cases}
-div\Big[\big(\e^2 +|\nabla v|^2\big)^\frac{p-2}2\nabla v\Big]=f(v)&in\ \Om,\\
v=0&on\ \partial\Om,
\end{cases}
\end{equation}
has a unique positive solution $u_\varepsilon\in C^{2,\alpha}(\bar{\Om})$. Moreover,  $u_\varepsilon\rightarrow u$ in $C^1(\bar{\Om})$ as $\varepsilon\rightarrow0$, 
and $u_\varepsilon$ is stable. 
\end{prop}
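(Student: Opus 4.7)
The plan is to apply the implicit function theorem to the $C^1$ map $L\colon E\to W^{-1,p'}(\Om)$ from \ref{3} at the base point $(0,u)$. Since $J_0=J$ and $u$ solves \ref{1}, we have $L(0,u)=J'(u)=0$. The IFT will produce a branch $\varepsilon\mapsto u_\varepsilon$ of solutions of $L(\varepsilon,v)=0$, equivalently of \ref{2}, with $u_\varepsilon\to u$ in $W_0^{1,p}(\Om)$ and local uniqueness in the neighborhood fixed by \ref{space}. I would then upgrade this to $C^1(\bar\Om)$ convergence and $C^{2,\alpha}(\bar\Om)$ regularity using the uniform ellipticity of \ref{2} for each fixed $\varepsilon\neq 0$, and finally check positivity and transfer stability.

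The first step is to verify the hypotheses of IFT. Differentiability of $L$ in $(\varepsilon,v)$ follows from the fact that for $p>2$ the integrand $(\varepsilon^2+|\nabla v|^2)^{p/2}/p$ is $C^2$ jointly in $(\varepsilon,\nabla v)$, together with $f\in C^1$. The delicate hypothesis is the invertibility of $\partial_v L(0,u)=J''(u)$, which, as a bounded linear map $W_0^{1,p}(\Om)\to W^{-1,p'}(\Om)$, is exactly the operator $L(u)$ of Definition \ref{defi}. Here the stability assumption is decisive: the first eigenvalue of $L(u)$ is strictly positive, so the associated bilinear form is coercive and symmetric, and by Lax--Milgram (or by the spectral theorem applied to the compact self-adjoint resolvent) $\partial_v L(0,u)$ is an isomorphism, as required.

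Once $u_\varepsilon$ is produced, equation \ref{2} is, for each fixed $\varepsilon$, uniformly elliptic with smooth structure, so a classical Schauder bootstrap yields $u_\varepsilon\in C^{2,\alpha}(\bar\Om)$. To upgrade the $W_0^{1,p}$-convergence to $C^1(\bar\Om)$ I would invoke the DiBenedetto--Lieberman $C^{1,\alpha}(\bar\Om)$ estimates applied to \ref{2} in a form uniform in $\varepsilon\in(-\varepsilon_0,\varepsilon_0)$; this relies on uniform $L^\infty$-bounds on $u_\varepsilon$ and $f(u_\varepsilon)$ which follow from the $W_0^{1,p}$-convergence together with Moser iteration. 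Arzelà--Ascoli and uniqueness of the limit then give $C^1(\bar\Om)$-convergence of the full sequence. Positivity of $u_\varepsilon$ is a consequence of $C^1$-closeness to $u>0$ on compacta of $\Om$ together with Hopf's lemma at $\partial\Om$ for the classical equation \ref{2}.

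For stability of $u_\varepsilon$, the linearized operator $L_\varepsilon(u_\varepsilon)$ from \ref{diff} depends continuously on $(\varepsilon,u_\varepsilon)$ in the $C^1$-topology; by the min--max characterization, the first eigenvalue of a family of symmetric uniformly elliptic operators with continuously varying bounded coefficients depends continuously on the coefficients, so the positivity of $\mu_1(L(u))$ transfers to $\mu_1(L_\varepsilon(u_\varepsilon))>0$ after possibly shrinking $\varepsilon_0$. The main technical obstacle I foresee is precisely the uniform $C^{1,\alpha}$ estimate for \ref{2} as $\varepsilon\to 0$: the regularity theory must be invoked in a form whose constants remain under control as the operator degenerates, which forces the choice of DiBenedetto--Lieberman-type estimates and a careful uniform control of $\|u_\varepsilon\|_\infty$ and $\|f(u_\varepsilon)\|_\infty$.
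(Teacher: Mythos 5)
Your overall route coincides with the paper's: apply the implicit function theorem to $L(\varepsilon,v)$ at $(0,u)$, use stability for the invertibility of $\partial_v L(0,u)$, get $u_\varepsilon\to u$ in $W_0^{1,p}(\Om)$, upgrade via uniform $C^{1,\beta}(\bar\Om)$ estimates and Arzel\`a--Ascoli to $C^1(\bar\Om)$ convergence, bootstrap to $C^{2,\alpha}(\bar\Om)$ by Schauder, obtain positivity from Hopf, and transfer stability to $u_\varepsilon$. Two minor deviations are harmless: the uniform $L^\infty$ bound does not need Moser iteration, since $p>2=N$ gives $W_0^{1,p}(\Om)\hookrightarrow L^\infty(\Om)$ directly (this is what the paper uses); and positivity should be obtained, as in the paper, by applying the Hopf lemma to $u$ (so that $u>0$ in $\Om$ and $\partial u/\partial\nu<0$ on $\partial\Om$) and then transferring via $C^1(\bar\Om)$ convergence --- invoking Hopf for $u_\varepsilon$ itself before knowing its sign is circular as phrased.

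The genuine gap is in your justification of the key hypothesis, the invertibility of $\partial_v L(0,u)\colon W_0^{1,p}(\Om)\to W^{-1,p'}(\Om)$. You argue that stability makes the bilinear form of Definition \ref{defi} coercive and then apply Lax--Milgram. This does not work as stated: for $p>2$ the form carries the weight $|\nabla u|^{p-2}$, which vanishes on the critical set of $u$, and in any case a quadratic form cannot be coercive with respect to the $W_0^{1,p}$ norm ($p>2$); positivity of $\langle L(u)\phi,\phi\rangle$ for all $\phi\neq 0$ gives neither a lower bound of the form $c\,\|\phi\|_{W_0^{1,p}}^2$ nor surjectivity onto $W^{-1,p'}(\Om)$. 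Lax--Milgram, applied in the natural weighted Hilbert space associated with $|\nabla u|^{p-2}$, would at best produce solutions in that weighted space for data in its dual, not an isomorphism $W_0^{1,p}\to W^{-1,p'}$; making this rigorous requires the weighted-Sobolev framework of the type developed in \cite{DS} and \cite{CDS}. To be fair, the paper's own proof is equally terse at this point (it simply states that stability implies invertibility), but since you commit to a specific mechanism (coercivity plus Lax--Milgram on $W_0^{1,p}$) that fails literally, this step needs a different and more careful argument. A similar caveat applies to the stability transfer: positivity of the form for every nonzero $\phi$ is not by itself a quantitative spectral gap, so the continuity-of-the-first-eigenvalue argument should be formulated in a setting where that infimum is genuinely positive, as implicitly assumed in Definition \ref{defi}.
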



\begin{proof}
 In order to apply the implicit function theorem, we need to prove

 \begin{description}
\item [(i)] $L(0,u)=0$.\\
\item [(ii)] $L(\varepsilon,v)$ is continuous in $E$.\\
 \item [(iii)] The derivative ${\partial_ v}L(\varepsilon,v)$ exists and is continuous in $E$. \\
\item [(iv)] ${\partial_ v}L(0,u)$ is invertible.
\end{description}

\noindent  If (i)-(iv) hold,
for any fixed $\varepsilon\in (-\varepsilon_0,\varepsilon_0)$, by the implicit function theorem, we get that, there exists a unique $u_\varepsilon\in W_0^{1,p}(\Om)$ such that
$L(\varepsilon,u_\varepsilon)=0$, i.e., $J_\varepsilon'(u_\varepsilon)=0$.
That is, problem \ref{2} admits a unique weak solution $u_\varepsilon$. Moreover, $u_\varepsilon\rightarrow u$ in $W_0^{1,p}(\Om)$ as $\varepsilon\rightarrow 0$.

 Let us verify (i)-(iv).
 Firstly, (i) holds because by the assumptions of $u$, we have $(0,u)\in E$ and
\begin{equation*}
  L(\varepsilon,v)|_{(\varepsilon,v)=(0,u)}=J'_\varepsilon(v)|_{(\varepsilon,v)=(0,u)}=J'(u)=0.
\end{equation*}
 Next, we consider (ii). 
By using the Lebesgue dominated convergence theorem, it is straightforward to prove that
\begin{align*}
  \int_{\Om}\big(\e_n^2 +|\nabla v_n|^2\big)^\frac{p-2}2(\nabla v_n\nabla \phi)
  \rightarrow \int_{\Om}\big(\e^2 +|\nabla v|^2\big)^\frac{p-2}2(\nabla v\nabla \phi),
\end{align*}
and
\begin{equation*}
 \int_{\Om} f(v_n)\phi\rightarrow \int_{\Om} f(v)\phi,
\end{equation*}
for any $\phi\in W_0^{1,p}(\Om)$, as $\varepsilon_n\rightarrow\varepsilon$ and $v_n\rightarrow v$ in $W_0^{1,p}(\Om)$. Thus
\begin{equation*}
  \langle J'_{\varepsilon_n}(v_n),\varphi\rangle\rightarrow \langle J'_\varepsilon(v),\varphi\rangle,\quad \text{for any $\phi\in W_0^{1,p}(\Om)$},
\end{equation*}
which means $J'_{\varepsilon_n}(v_n)\rightarrow J'_{\varepsilon}(v)$
as $\varepsilon_n\rightarrow\varepsilon$ and $v_n\rightarrow v$ in $W_0^{1,p}(\Om)$.
This proves (ii). 
Similarly, 
we can prove that (iii) holds.
Finally, by the stability of $u$, namely,
\begin{equation*}
\int_{\Om}|\nabla u|^{p-2}|\nabla \phi|^2+(p-2)|\nabla u|^{p-4}(\nabla u\nabla \phi)^2
-f'(u)\phi^2>0,
\end{equation*}
we deduce that ${\partial_ v}L(0,u)$ is invertible,
which gives (iv).   

Since $u_\varepsilon\rightarrow u$ in $W_0^{1,p}(\Om)$ as $\varepsilon\rightarrow 0$, we have $\|u_\varepsilon\|_{W_0^{1,p}(\Om)}\leq {C}_1$ with $C_1>0$ independent of $\varepsilon$.
By $p>2$, using the Sobolev embedding theorem, we have
\begin{equation*}
  \|u_\varepsilon\|_{L^\infty(\Omega)}\leq C_2\|u_\varepsilon\|_{W_0^{1,p}(\Om)}\leq C_1C_2,
\end{equation*}
for some positive constant ${C}_2$ independent of $\varepsilon$.
Hence, by the uniform $L^\infty$ estimate, we
applying \cite[Proposition 3.1, Lemma 4.1]{CFV1} and
\cite[Theorem 1]{Li} to deduce that $u_\varepsilon\in C^{1,\beta}(\bar{\Om})$ for some $\beta\in (0,1)$. In addition, $\beta$ is independent of $\varepsilon$ and $\|u_\varepsilon\|_{C^{1,\beta}(\bar{\Om})}$ is uniformly bounded in $\varepsilon$. Consequently, by Arzel\`{a}-Ascoli Theorem, $u_\varepsilon$ converges to a function $u_*$ in $C^1(\bar{\Om})$  as $\varepsilon\rightarrow 0$. Then $u_\varepsilon\rightarrow u_*$ in $W_0^{1,p}(\Om)$ as $\varepsilon\rightarrow 0$. By the uniqueness of the limit, we have $u=u_*$.

Notice that $f(u_\varepsilon)\in C^{1,\beta}({\bar{\Om}})$, the standard regularity results (see \cite[Theorem 6.6]{GT}) give at least that $u_\varepsilon\in C^{2,\beta}(\bar{\Om})$.
 Moreover, by the Hopf boundary lemma (see \cite[Lemma A.3]{S}), we have $\frac{\partial u}{\partial \nu}<0$, thus $\nabla u\neq\mathbf{0}$ at any point on $\partial \Om$, where $\nu$ denotes the unit exterior normal vector to $\partial \Om$. This together with $u>0$ in $\Om$  and $u_\varepsilon\rightarrow u$ in $C^1(\bar{\Om})$ yields that $u_\varepsilon>0$ in $\Om$.
Obviously, the stability
of $u$ implies 
\begin{equation*}
  \langle L_\varepsilon(u_\varepsilon)\varphi,\varphi\rangle>0,
\end{equation*}
for any $\phi \in W_0^{1,p} (\Om)\backslash \{0\}$, where $L_\varepsilon(u_\varepsilon)$ is
 the linearized operator of \ref{2} at $u_\varepsilon$, defined by, for any $\phi,\psi\in W_0^{1,p}(\Om)$,
 \begin{align}\label{diff}
\langle L_\varepsilon(u_\varepsilon)\varphi,\psi\rangle=&\int_{\Om}\big(\e^2 +|\nabla u_\varepsilon|^2\big)^\frac{p-2}2(\nabla \phi\nabla\psi)+
(p-2)\big(\e^2 +|\nabla u_\varepsilon|^2\big)^\frac{p-4}2
\nonumber\\
&\times (\nabla u_\varepsilon\nabla \phi)(\nabla u_\varepsilon\nabla\psi)-{{f'(u_\varepsilon)}\phi\psi}.
\end{align}
So $u_\varepsilon$ is stable, and we complete the proof.
\end{proof}

\section{Proof of Theorem \ref{th1}}\label{sec3}


For any fixed $\varepsilon\in (-\varepsilon_0,\varepsilon_0)$, let $u_\varepsilon$ be given in Proposition \ref{toget}, we have
\begin{thm}\label{th2}
Under the assumptions of Theorem \ref{th1}, $u_\varepsilon$ has a unique critical point, which is a non-degenerate maximum point in the sense that the Hessian of $u$ at this point is negative definite.
\end{thm}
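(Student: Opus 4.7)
The plan is to adapt the Cabr\'e--Chanillo strategy \cite{CC} (and its extension \cite{DGM}) to the approximating problem \ref{2}, taking full advantage of the fact that for $\varepsilon>0$ the operator is uniformly elliptic and $u_\varepsilon\in C^{2,\alpha}(\bar\Om)$. This is the whole point of the regularization in Section \ref{sec2}: all the classical Morse-theoretic and Bochner-type tools, which are unavailable for the original $p$-Laplace equation in the critical set, apply without obstruction to $u_\varepsilon$.

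First I would set up the geometry of the critical set. The Hopf boundary lemma invoked in Proposition \ref{toget} gives $\partial_\nu u_\varepsilon<0$ on $\partial\Om$, so all critical points are confined to a compact subset of $\Om$. Interior local minima are ruled out directly: at such a point $p$ one has $\nabla u_\varepsilon(p)=0$ and $\Delta u_\varepsilon(p)\geq 0$, while the equation evaluated at $p$ yields $-\varepsilon^{p-2}\Delta u_\varepsilon(p)=f(u_\varepsilon(p))>0$, a contradiction. Since $-\nabla u_\varepsilon$ points outward on $\partial\Om$, the Poincar\'e--Hopf theorem on the simply connected planar domain $\Om$ gives
\begin{equation*}
m-s=\chi(\Om)=1,
\end{equation*}
where $m$ is the (weighted) count of maxima and $s$ the (weighted) count of saddles of $u_\varepsilon$.

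The heart of the argument is to use stability to force $s=0$ (and hence $m=1$). Assuming for contradiction that $u_\varepsilon$ possessed a saddle, I would choose a test function $\phi$ built from the tangential derivative of $u_\varepsilon$ along its level sets (or, equivalently, from a suitable component of $\hess u_\varepsilon$) and plug it into the stability quadratic form $\langle L_\varepsilon(u_\varepsilon)\phi,\phi\rangle$ from \ref{diff}. A Bochner-type identity adapted to the quasilinear operator converts this quadratic form into a boundary integral weighted by the curvature of $\partial\Om$, plus a bulk contribution carrying the topological defect produced by the separatrices at the saddle. The assumption that $\kappa_{\partial\Om}>0$ makes the boundary piece favorable, while the saddle forces the bulk term to have the wrong sign, thereby contradicting the strict stability inequality. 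Non-degeneracy of the unique maximum $p_\varepsilon$ is then obtained in the same vein: a zero eigenvalue of $\hess u_\varepsilon(p_\varepsilon)$ would provide a second-order null direction from which one again manufactures a destabilizing test function, contradicting that the first eigenvalue of $L_\varepsilon(u_\varepsilon)$ is strictly positive.

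The main obstacle is the quasilinear nature of $L_\varepsilon$. Compared to the Laplacian in \cite{CC}, the operator carries the extra anisotropic term $(p-2)\bigl(\varepsilon^2+|\nabla u_\varepsilon|^2\bigr)^{(p-4)/2}(\nabla u_\varepsilon\cdot\nabla\phi)^2$, and every ingredient---the Bochner identity, the choice of the tangential test function, the boundary integration by parts that brings $\kappa_{\partial\Om}$ into play---must be recomputed with these weighted coefficients, with careful bookkeeping of signs. The fact that $\varepsilon>0$ keeps these weights smooth and bounded is exactly what allows the classical Cabr\'e--Chanillo carry-through to go through intact; the passage $\varepsilon\to 0$ is then handled in the subsequent sections via the $C^1$-convergence $u_\varepsilon\to u$ established in Proposition \ref{toget}.
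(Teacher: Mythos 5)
Your general orientation (exploit the uniform ellipticity of the regularized problem \ref{2}, use stability together with the positive curvature of $\partial\Om$, follow Cabr\'e--Chanillo) matches the paper, but the central step of your argument is not a proof. The claim that a saddle is excluded by inserting ``the tangential derivative of $u_\varepsilon$ along its level sets'' into the stability form \ref{diff} and invoking ``a Bochner-type identity'' that converts it into a curvature-weighted boundary integral plus a bulk term of the wrong sign is exactly the point that requires a construction, and no such identity is stated, nor is one available in this form. The way stability actually enters (in the paper, as in \cite{CC,DGM}) is through the directional derivatives $u_{\varepsilon,\theta}=\langle\nabla u_\varepsilon,\mathbf{e_\theta}\rangle$: differentiating \ref{2} shows $u_{\varepsilon,\theta}$ solves the linearized equation; the uniform ellipticity and Lipschitz bounds \ref{eq12}--\ref{eq3} make Bers' theorem and strong unique continuation applicable, so the nodal set $N_\theta$ is locally a union of $C^2$ arcs and at a point of $M_\theta$ at least two arcs cross transversally; strict convexity (positive curvature) forces $N_\theta\cap\partial\Om$ to consist of exactly two points; and stability is used to show $N_\theta$ cannot enclose any subdomain $W\subset\Om$, since otherwise $u_{\varepsilon,\theta}$ would be a nontrivial Dirichlet zero mode of $L_\varepsilon(u_\varepsilon)$ in $W$, contradicting the positivity of its first eigenvalue. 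This combination yields $M_\theta\cap\Om=\emptyset$ for every $\theta$, i.e.\ every critical point of $u_\varepsilon$ has Hessian of rank two; your sketch contains no substitute for this nodal-set analysis, and without it neither the exclusion of saddles nor the non-degeneracy of the maximum is justified (your non-degeneracy argument is likewise only an assertion that a ``destabilizing test function'' can be manufactured).

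There is a second gap in the counting step: the Poincar\'e--Hopf identity $m-s=\chi(\Om)=1$ presupposes that the critical points are isolated with well-defined indices, which is precisely what is unknown before non-degeneracy is established (a priori the critical set of $u_\varepsilon$ could contain degenerate points or continua). The paper circumvents this by computing the degree of the map $T(x)=(u_{\varepsilon,x_2x_2}u_{\varepsilon,x_1}-u_{\varepsilon,x_1x_2}u_{\varepsilon,x_2},\,u_{\varepsilon,x_1x_1}u_{\varepsilon,x_2}-u_{\varepsilon,x_1x_2}u_{\varepsilon,x_1})$ built from second derivatives, following \cite{DGM}: positive boundary curvature and star-shapedness give $\deg(T,\Om,\mathbf{0})=1$ (Lemma \ref{lem1}), every zero of $T$ is either a critical point of $u_\varepsilon$ or a point of some $M_\theta$ (Lemma \ref{lem2}), and once $M_\theta\cap\Om=\emptyset$ each zero is a non-degenerate critical point contributing index $+1$ (Lemma \ref{lem3}), so there is exactly one critical point, necessarily the maximum. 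Your observation that interior minima are impossible because $-\varepsilon^{p-2}\Delta u_\varepsilon=f(u_\varepsilon)>0$ at a critical point is correct but plays no role once this machinery is in place; as written, your proposal is missing the key lemmas rather than offering an alternative route to them.
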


To prove Theorem \ref{th2}, as in \cite[Section 2]{CC}, we introduce the following notation: for any $\theta\in [0,2\pi)$, we write $\mathbf{{e_\theta}}=(\cos\theta, \sin \theta)$, and set
 \begin{equation*}
 N_\theta=\big\{x\in \bar \Om: u_{\varepsilon,\theta}(x)=\langle \nabla u_\varepsilon(x),\mathbf{{e_\theta}}\rangle=0\big\},
 \end{equation*}
\begin{equation*}
  M_\theta=\big\{x\in N_\theta: \nabla u_{\varepsilon,\theta}(x)=D^2 u_\varepsilon(x)\cdot \mathbf{{e_\theta}}=\mathbf{0}\big\}.
\end{equation*}
Moreover, by a generalized Hopf boundary lemma (see \cite[Theorem 1.1]{Sa}), we have $\frac{\partial u_\varepsilon}{\partial \nu}<0$, thus $\nabla u_\varepsilon\neq\mathbf{0}$ on $\partial \Om$. 
Recall that $u_\e\in C^{3}(\bar \Om)$ by the standard regularity theory, 
then the curvature at $x\in \partial \Om$ is given by
\begin{equation*}
  \mathfrak{R}(x)=-\frac{u_{\varepsilon,x_2x_2}u_{\varepsilon,x_1}^2-2u_{\varepsilon,x_1x_2}u_{\varepsilon,x_1}u_{\varepsilon,x_2}+u_{\varepsilon,x_1x_1}u_{\varepsilon,x_2}^2}{|\nabla u_\varepsilon|^3}.
\end{equation*}
The following result tells us that the nodal sets $N_\theta$ are $C^2$ curves in $\bar \Om$ without self-intersection, and any critical point of $u_\varepsilon$ is non-degenerate.

\begin{prop}\label{prop1}
Under the assumptions of Theorem \ref{th1}, for
any $\theta\in [0,2\pi)$, the nodal set $N_\theta$ is a $C^2$ curve in $\bar \Om$ without self-intersection, which hits $\partial \Om$ at the two end points of $N_\theta$. Moreover, in any critical point of $u_\varepsilon$, the Hessian has rank $2$.
\end{prop}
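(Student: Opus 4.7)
The plan is to pass to the linearized equation satisfied by the directional derivative $w := u_{\varepsilon,\theta} = \langle \nabla u_\varepsilon, \mathbf{e}_\theta\rangle \in C^2(\bar\Om)$. Differentiating \ref{2} in the direction $\mathbf{e}_\theta$, which is legitimate because \ref{2} is translation invariant and, for $\varepsilon\neq 0$, smooth in $\nabla v$, yields $L_\varepsilon(u_\varepsilon)w = 0$ in $\Om$, with $L_\varepsilon(u_\varepsilon)$ the operator in \ref{diff}. Since $\varepsilon\neq 0$, the coefficient $(\varepsilon^2+|\nabla u_\varepsilon|^2)^{(p-2)/2}$ is bounded above and below on $\bar\Om$, so $L_\varepsilon$ is uniformly elliptic, self-adjoint, with continuous coefficients. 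By Proposition \ref{toget}, $u_\varepsilon$ is stable, which in this symmetric setting is equivalent to the principal Dirichlet eigenvalue $\mu_1(L_\varepsilon,\Om)$ being strictly positive; this, combined with domain monotonicity of $\mu_1$, is the tool driving the whole argument.

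First I would treat the behaviour of $N_\theta$ at the boundary. Because $\partial u_\varepsilon/\partial\nu<0$ on $\partial\Om$, one has $\nabla u_\varepsilon = -|\nabla u_\varepsilon|\,\nu$ there, so $w|_{\partial\Om} = -|\nabla u_\varepsilon|\,\langle\nu,\mathbf{e}_\theta\rangle$ and $N_\theta\cap\partial\Om$ is precisely the set of boundary points whose outward normal is orthogonal to $\mathbf{e}_\theta$. Strict positivity of $\mathfrak{R}$ makes $\partial\Om$ convex and turns its Gauss map $\partial\Om\to\mathbb{S}^1$ into a diffeomorphism, giving exactly two such points $P^\theta_\pm$; the same positivity forces $w$ to cross zero transversally along $\partial\Om$ at each of them, with $\nabla w(P^\theta_\pm)\neq\mathbf{0}$. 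Hence $N_\theta$ meets $\partial\Om$ only at the two points $P^\theta_\pm$, transversally, and is locally a $C^2$ arc there.

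The heart of the argument is the claim $M_\theta = \emptyset$, which yields both conclusions at once: it forces $\nabla w\neq\mathbf{0}$ everywhere on $N_\theta$, so $N_\theta$ is a $C^2$ one-submanifold by the implicit function theorem, and evaluated at a critical point $x_0$ of $u_\varepsilon$ it gives $D^2 u_\varepsilon(x_0)\mathbf{e}_\theta\neq\mathbf{0}$ for every $\theta\in[0,2\pi)$, i.e.\ $D^2 u_\varepsilon(x_0)$ has rank $2$. I would argue by contradiction: if $x_0\in M_\theta$, then $w$ and $\nabla w$ vanish at $x_0$, and by the Bers-Hartman-Wintner expansion for solutions of uniformly elliptic linear equations with continuous coefficients in the plane, $w$ has a finite vanishing order $k\geq 2$ at $x_0$ and $N_\theta$ near $x_0$ consists of $2k\geq 4$ H\"older arcs meeting at $x_0$ with equal angles. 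Since only two arcs of $N_\theta$ can escape to $\partial\Om$, through $P^\theta_\pm$, a planar topological argument in the spirit of Cabr\'e-Chanillo \cite{CC} produces a connected component $U$ of $\Om\setminus N_\theta$ with $\bar U\subset\Om$ on which $w$ has constant sign and vanishes on $\partial U$. Then $w|_U\in H^1_0(U)\setminus\{0\}$ satisfies $L_\varepsilon w = 0$, so by the Rayleigh characterisation $\mu_1(L_\varepsilon,U)\leq 0$; but domain monotonicity yields $\mu_1(L_\varepsilon,U)\geq \mu_1(L_\varepsilon,\Om)>0$, a contradiction. An analogous argument forbids closed loops of $N_\theta$ inside $\Om$, and combined with the two boundary exits this forces $N_\theta$ to be a single embedded $C^2$ arc from $P^\theta_-$ to $P^\theta_+$.

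The main obstacle is the planar topological step that manufactures the interior nodal domain $U$: one has to trace the $\geq 4$ H\"older arcs issuing from a hypothetical singular point $x_0$ (or bounding a closed loop), apply a Jordan-type separation argument, and use the fact that only two arcs can reach $\partial\Om$ to confine a sector of $\Om\setminus N_\theta$ compactly in $\Om$. This is exactly the two-dimensional obstruction exploited in \cite{CC}; once the uniform ellipticity of $L_\varepsilon$ and the bound $\mu_1(L_\varepsilon,\Om)>0$ provided by stability are in place, the Cabr\'e-Chanillo construction adapts to the present regularised setting with essentially no change.
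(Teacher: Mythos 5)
Your proposal follows essentially the same route as the paper: differentiate the regularized equation to get the uniformly elliptic linearized equation for $u_{\varepsilon,\theta}$, use the positive curvature of $\partial\Om$ to get exactly two transversal boundary intersections, apply a Bers-type expansion at interior points of $M_\theta$ to force at least two crossing nodal arcs, and exclude enclosed nodal domains via the stability of $u_\varepsilon$ together with strict domain monotonicity of the first eigenvalue of $L_\varepsilon(u_\varepsilon)$, exactly as in the Cabr\'e--Chanillo scheme the paper adapts. The only cosmetic difference is that the paper explicitly verifies the Lipschitz continuity of the coefficients (estimate \ref{eq3}) in order to invoke strong unique continuation (Aronszajn, Garofalo--Lin) before the Bers expansion, whereas you appeal to the expansion with merely continuous coefficients; since the coefficients here are in fact Lipschitz, this is an imprecision of citation rather than a gap.
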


\begin{proof}
We adopt the idea of \cite{CC} to complete our proof. First, using the implicit function theorem, we immediately have

\vspace{.1cm}

$\bullet$ {\em Property 1: around any point $x\in (N_\theta\cap \Om)\backslash M_\theta$, the nodal set $N_\theta$ is a $C^2$ curve.}

\vspace{.1cm}

Since the positivity of the curvature on the boundary implies the
strictly convexity of $\Om$,
arguing as the proof of \emph{Property 2} in \cite[Pages 4-5]{CC}, using the implicit function theorem again, we obtain

\vspace{.1cm}

$\bullet$ {\em Property 2: $M_\theta \cap \partial \Om=\emptyset$, $N_\theta \cap \partial \Om$ consists of exactly two points $p_1, p_2$, and around each $p_i$, $N_\theta$ is a $C^2$ curve that intersects $\partial \Om$ transversally at $p_i$, $i=1,2$.}

\vspace{.1cm}

Differentiating \ref{2} with respect to the direction $\mathbf{{e_\theta}}$, we obtain
\begin{align*}
  -div\Big[\big(\e^2 +|\nabla u_\varepsilon|^2\big)^\frac{p-2}2\nabla u_{\varepsilon,\theta}+
 (p-2) \big(\e^2 +|\nabla u_\varepsilon|^2\big)^\frac{p-4}2(\nabla u_\varepsilon &\nabla u_{\varepsilon,\theta})\nabla u_\varepsilon
  \Big]\\
  =&f'(u_\varepsilon)u_{\varepsilon,\theta} \quad in \ \Om.
\end{align*}
 Notice that
 \begin{align*}
  & \big(\e^2 +|\nabla u_\varepsilon|^2\big)^\frac{p-2}2\nabla u_{\varepsilon,\theta}+
 (p-2) \big(\e^2 +|\nabla u_\varepsilon|^2\big)^\frac{p-4}2(\nabla u_\varepsilon \nabla u_{\varepsilon,\theta})\nabla u_\varepsilon
\\=&\big(\e^2 +|\nabla u_\varepsilon|^2\big)^\frac{p-4}2A(x)\nabla u_{\varepsilon,\theta},
 \end{align*}
 where
 \begin{equation*}
 A(x)=(a_{ij}(x))=\left(
  {\begin{array}{cccc}
   \displaystyle\e^2 +|\nabla u_\varepsilon|^2+(p-2)u_{\varepsilon,x_1}^2& (p-2)u_{\varepsilon,x_1}u_{\varepsilon,x_2}\\
  (p-2)u_{\varepsilon,x_1}u_{\varepsilon,x_2}&\e^2 +|\nabla u_\varepsilon|^2+(p-2)u_{\varepsilon,x_2}^2
 \end{array}}
 \right).
\end{equation*}
For any $\xi=(\xi_1,\xi_2)\in \mathbb{R}^2$, 
we have
\begin{align*}
&a_{11}(x)\xi_1^2+a_{22}(x)\xi_2^2+2a_{12}(x)\xi_1\xi_2 \nonumber\\
=
  &\big(\e^2 +|\nabla u_\varepsilon|^2+(p-2)u_{\varepsilon,x_1}^2\big) \xi_1^2+\big(\e^2 +|\nabla u_\varepsilon|^2+(p-2)u_{\varepsilon,x_2}^2\big) \xi_2^2 \nonumber\\&+2(p-2)u_{\varepsilon,x_1}u_{\varepsilon,x_2}\xi_1\xi_2 \\
=&  \big(\e^2 +|\nabla u_\varepsilon|^2\big) |\xi|^2+(p-2)(u_{\varepsilon,x_1}\xi_1+u_{\varepsilon,x_2}\xi_2)^2
 \nonumber \\
 \leq & \big(\e^2 +(p-1)|\nabla u_\varepsilon|^2\big) |\xi|^2 \leq (p-1)\big(\e^2 +|\nabla u_\varepsilon|^2\big) |\xi|^2\nonumber,
\end{align*}
and
\begin{align*}
a_{11}(x)\xi_1^2+a_{22}(x)\xi_2^2+2a_{12}(x)\xi_1\xi_2 \geq \big(\e^2 +|\nabla u_\varepsilon|^2\big) |\xi|^2.
\end{align*}
Thus,
\begin{align}\label{eq12}
\varepsilon^{{p-2}}|\xi|^2\leq \big(\e^2 +|\nabla u_\varepsilon|^2\big)^\frac{p-2}2|\xi|^2 &\leq \big(\e^2 +|\nabla u_\varepsilon|^2\big)^\frac{p-4}2\sum\limits_{i,j=1}^2a_{ij}(x)\xi_i\xi_j \nonumber \\
&\leq (p-1)\big(\e^2 +|\nabla u_\varepsilon|^2\big)^\frac{p-2}2|\xi|^2\leq C|\xi|^2.
\end{align}
In addition, for any $x,y\in \Om$,  there exists $\Lambda>0$ such that,
\begin{equation}\label{eq3}
  |a_{ij}(x)-a_{ij}(y)|\leq \Lambda |x-y|,\quad i,j=1,2.
\end{equation}
For instance, using the mean value theorem, we have
\begin{align*}
  |a_{11}(x)-a_{11}(y)|=&|\nabla u_\varepsilon(x)|^2-|\nabla u_\varepsilon(y)|^2+(p-2)\big[u_{\varepsilon,x_1}^2(x)-u_{\varepsilon,x_1}^2(y)\big]\\
  \leq & C_1\big[|\nabla u_\varepsilon(x)|-|\nabla u_\varepsilon(y)|\big]+C_1\big[u_{\varepsilon,x_1}(x)-u_{\varepsilon,x_1}(y)\big]\\
  \leq & C_2 u_{\varepsilon,x_hx_k}(z)|_{z=\vartheta x+(1-\vartheta)y}|x-y|\leq
 \Lambda |x-y|, \quad h,k=1,2,
\end{align*}
for some $\vartheta\in [0,1]$. Therefore, for any $i,j=1,2$,
\begin{equation*}
  \big(\e^2 +|\nabla u_\varepsilon|^2\big)^\frac{p-4}2|a_{ij}(x)-a_{ij}(y)|\leq \big(C+\varepsilon^{p-4}\big)|x-y|.
\end{equation*}
Since $|f'(u_\varepsilon)|\leq {C}$, we have that, at any point $x\in N_\theta \cap \Omega$, $u_{\varepsilon,\theta}$ vanishes but not of infinite order. Otherwise, by  \ref{eq12} and \ref{eq3}, using the strong unique continuation theorem (see \cite[Theorem]{A} or \cite[Theorem 1.1]{GL}), 
we obtain $u_{\varepsilon,\theta}\equiv 0$ in $\Om$, which contradicts $N_\theta \cap \partial \Om=\{p_1,p_2\}$.
Moreover, we have
\begin{align}\label{f1}
  &-\big(\e^2 +|\nabla u_\varepsilon|^2\big)^\frac{p-2}2\Delta u_{\varepsilon,\theta}-(p-2) \big(\e^2 +|\nabla u_\varepsilon|^2\big)^\frac{p-4}2(\nabla u_\varepsilon \nabla u_{\varepsilon,\theta})\Delta u_\varepsilon \nonumber\\
  &-(p-2)(p-4) \big(\e^2 +|\nabla u_\varepsilon|^2\big)^\frac{p-6}2(\nabla u_\varepsilon \nabla u_{\varepsilon,\theta})\sum_{i,j=1}^2
 u_{\varepsilon,x_i}u_{\varepsilon,x_j}u_{\varepsilon,x_ix_j}
 \nonumber \\
  &-2(p-2) \big(\e^2 +|\nabla u_\varepsilon|^2\big)^\frac{p-4}2\sum_{i,j=1}^2
  u_{\varepsilon,\theta,x_i}u_{\varepsilon,x_j}u_{\varepsilon,x_ix_j}
  \\&-(p-2) \big(\e^2 +|\nabla u_\varepsilon|^2\big)^\frac{p-4}2\sum_{i,j=1}^2
 u_{\varepsilon,x_i}u_{\varepsilon,x_j}u_{\varepsilon,\theta,x_ix_j}
  =f'(u_\varepsilon)u_{\varepsilon,\theta}.\nonumber
\end{align}
Using {\cite[Theorem I]{B}}, we know that, around any point $x_0\in N_\theta\cap \Omega$, $u_{\varepsilon,\theta}$ behaves locally as a homogeneous polynomial $p_m(x)$ of degree $m\geq 1$, which satisfies
\begin{align*}
 a_{11}(x_0)p_{m,x_1x_1}(x)+a_{22}(x_0)p_{m,x_2x_2}(x)+2a_{12}(x_0)p_{m,x_1x_2}(x)=0.
\end{align*} 
By a direct computation, it holds
\begin{align*}
  |A(x_0)|&=a_{11}(x_0)a_{22}(x_0)-a_{12}^2(x_0)
  \\&=\varepsilon^4+p\varepsilon^2 |\nabla u_\varepsilon(x_0)|^2+(p-1)|\nabla u_\varepsilon(x_0)|^4>0.
\end{align*}
So, by a change of coordinates, 
$p_m(x)$ is harmonic. Moreover, if $x_0\in M_\theta\cap \Omega$, $p_m(x)$ is a homogeneous harmonic polynomial of degree bigger than or
equal to $2$.
 Hence, we have
\vspace{.1cm}

$\bullet$ {\em Property 3: around any point $x\in M_\theta\cap \Omega$, $N_\theta$ consists of at least two $C^2$ curves intersecting transversally at $x$.}

\vspace{.1cm}

 Finally, by the stability of $u_\varepsilon$, we claim:

\vspace{.1cm}

$\bullet$ {\em Property 4: $N_\theta$ cannot ``enclose'' any sub-domain of $\Om$. More precisely, if $W\subset \Om$ is a domain, then $\partial W \not \subset N_\theta$. Here $\partial W$ denotes the boundary of $W$ as a subset of ${\R}^2$, and we assume that $W\neq \emptyset$.}

\vspace{.1cm}
Indeed, if $\partial W \subset N_\theta$, then $|\Om \backslash W|>0$ by {Property 2}. By the monotonicity of the first eigenvalue with respect to domains, the first eigenvalue of $L_\varepsilon(u_\varepsilon)$ in $W$ is positive. On the other hand, we have
\begin{equation*}
\begin{cases}
-div\Big[\big(\e^2 +|\nabla u_\varepsilon|^2\big)^\frac{p-2}2\nabla u_{\varepsilon,\theta}+
 (p-2) \big(\e^2 +|\nabla u_\varepsilon|^2\big)^\frac{p-4}2(\nabla u_\varepsilon \nabla u_{\varepsilon,\theta})\nabla u_\varepsilon
  \Big]\\
  \qquad \qquad\qquad\qquad\qquad\qquad\qquad\qquad\qquad\qquad\quad \quad  \ \  \displaystyle=f'(u_\varepsilon)u_{\varepsilon,\theta}  \quad  in\ W,\\
u_{\varepsilon,\theta}=0 \qquad\qquad\qquad\qquad\qquad\qquad\qquad\qquad\qquad \qquad\qquad\qquad\quad \    on\ \partial W.
\end{cases}
\end{equation*}
We claim that $u_{\varepsilon,\theta} \not \equiv0$ in $W$, if not, $u_{\varepsilon,\theta} \equiv0$ in $W$ implies that  
$u_{\varepsilon,\theta}$ vanishes of infinite order in the interior of $W$, 
which is an absurd.
Moreover, it follows from $u_\varepsilon\in C^{3}(\bar{\Om})$ that  
$u_{\varepsilon,\theta}\in W_0^{1,p} (\Om)\backslash \{0\}$.
So the first eigenvalue of $L_\varepsilon(u_\varepsilon)$ in $W$ is non-positive, a contradiction.

Using Properties $1$ to $4$, we  complete the proof.
\end{proof}

For $u_\varepsilon$ given in Proposition \ref{toget}, consider the map $T: \bar \Om \rightarrow {\R}^2$ given by
\begin{equation*}
 T(x)= (u_{\varepsilon,x_2x_2}u_{\varepsilon,x_1}-u_{\varepsilon,x_1x_2}u_{\varepsilon,x_2},
 u_{\varepsilon,x_1x_1}u_{\varepsilon,x_2}-u_{\varepsilon,x_1x_2}u_{\varepsilon,x_1}).
\end{equation*}
Since $u_\e\in C^{3}(\bar \Om)$, $T$ is of class $C^1$.
\begin{lemma}\label{lem1}
We have  $\mathbf{0}\not\in T(\partial \Om)$ and $\deg (T,\Om,\mathbf{0})=1$.
\end{lemma}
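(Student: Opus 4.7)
The plan is to prove the two claims through a single key identity: on $\partial\Om$, $T(x)\cdot\nabla u_\e(x)$ equals a negative multiple of the boundary curvature. This gives the non-vanishing on $\partial\Om$ immediately, and it also lets us deform $T$ to $-\nabla u_\e$ through a homotopy that avoids $\mathbf{0}$ on the boundary, reducing the degree to a classical computation.

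First I would expand directly from the definition of $T$:
\begin{equation*}
T(x)\cdot\nabla u_\e(x)=u_{\e,x_2x_2}u_{\e,x_1}^{2}-2u_{\e,x_1x_2}u_{\e,x_1}u_{\e,x_2}+u_{\e,x_1x_1}u_{\e,x_2}^{2}=-|\nabla u_\e|^{3}\mathfrak{R}(x)
\end{equation*}
on $\partial\Om$, where the last equality is just the formula for $\mathfrak{R}$ recalled before Proposition \ref{prop1}. By Proposition \ref{toget} together with the Hopf lemma, $|\nabla u_\e|>0$ on $\partial\Om$; combined with the hypothesis $\mathfrak{R}>0$, this forces $T(x)\cdot\nabla u_\e(x)<0$ on $\partial\Om$, giving $\mathbf{0}\not\in T(\partial\Om)$.

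For the degree, I would use the linear homotopy $H_s(x):=(1-s)T(x)-s\,\nabla u_\e(x)$ for $s\in[0,1]$ connecting $T$ to $-\nabla u_\e$. For every $x\in\partial\Om$ and every $s\in[0,1]$,
\begin{equation*}
H_s(x)\cdot\nabla u_\e(x)=-(1-s)|\nabla u_\e|^{3}\mathfrak{R}(x)-s\,|\nabla u_\e|^{2}<0,
\end{equation*}
so $H_s$ never vanishes on $\partial\Om$. Homotopy invariance of the Brouwer degree then yields $\deg(T,\Om,\mathbf{0})=\deg(-\nabla u_\e,\Om,\mathbf{0})$. Since $u_\e>0$ in $\Om$ and $u_\e=0$ on $\partial\Om$, the field $-\nabla u_\e$ is a positive scalar multiple of the outward unit normal along $\partial\Om$; the positive curvature hypothesis makes $\Om$ strictly convex, in particular simply connected with $\chi(\Om)=1$, and a standard winding number argument (equivalently, Poincar\'e--Hopf for outward-pointing fields on a topological disk) gives $\deg(-\nabla u_\e,\Om,\mathbf{0})=1$.

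No serious obstacle is expected: the whole argument rests on recognizing the curvature expression inside $T\cdot\nabla u_\e$, a purely algebraic check, after which both claims follow from standard degree theory. Notably $T$ itself does not depend on $\e$, so the proof is essentially identical to the semilinear case treated in \cite{CC}; the $p$-Laplacian enters only through the regularity and Hopf-type statements already established in Proposition \ref{toget} to guarantee $|\nabla u_\e|>0$ on $\partial\Om$.
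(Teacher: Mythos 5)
Your proposal is correct, and it isolates explicitly the identity that the paper only uses implicitly: dotting with $\nabla u_\e$ and recognizing $T\cdot\nabla u_\e=-\mathfrak{R}\,|\nabla u_\e|^3$ on $\partial\Om$. The difference lies in the homotopy and in how the degree is finally computed. The paper deforms $T$ linearly to the translated identity field $x-x_0$ with $x_0\in\Om$, and rules out zeros of $tT(x)+(1-t)(x-x_0)$ on $\partial\Om$ by pairing the curvature term against $(x-x_0)\cdot\nu>0$, which uses the strict star-shapedness coming from the positive-curvature (hence strictly convex) boundary; the degree is then $1$ at once, and the case $t=1$ of the admissibility gives $\mathbf{0}\notin T(\partial\Om)$ as a by-product. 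You instead deform $T$ to $-\nabla u_\e$, for which admissibility needs only the sign of $T\cdot\nabla u_\e$ and $|\nabla u_\e|>0$ on $\partial\Om$ (no star-shapedness), and then pay for this with one extra, but standard, topological step: $-\nabla u_\e$ is a positive multiple of the outward normal, so its boundary winding number is the turning number of the simple closed curve $\partial\Om$, i.e. $\deg(-\nabla u_\e,\Om,\mathbf{0})=\chi(\Om)=1$ by the Umlaufsatz/Poincar\'e--Hopf. Both arguments are sound and rely on the same analytic inputs ($u_\e\in C^{2}(\bar\Om)$, $u_\e>0$ in $\Om$, $\partial u_\e/\partial\nu<0$ from Proposition \ref{toget} and the Hopf-type lemma); the paper's version has the advantage that the same star-shaped homotopy is recycled almost verbatim in Section \ref{sec4} on the punctured domain $\Om_\rho$, while yours makes the geometric mechanism (curvature as $-T\cdot\nabla u_\e/|\nabla u_\e|^3$) more transparent and slightly decouples the degree count from convexity.
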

\begin{proof}
Let $x_0=(x_{01},x_{02})\in \Om$ and consider the homotopy
\begin{align*}
  H:[0,1]\times  \bar \Om &\rightarrow {\R}^2\\
  (t,x)&\mapsto tT(x)+(1-t)(x-x_0),
\end{align*}
then $H$ is an admissible homotopy, i.e., $H(t,x)\neq \mathbf{0}$ for any  $t\in [0,1]$ and $x\in \partial \Om$. Otherwise, by direct computations, there exist $\tau\in [0,1]$ and $\bar{x}=(\bar{x}_{1},\bar{x}_{2})\in \partial \Om$ such that
\begin{equation*}
  -\tau\mathfrak{R}(\bar{x})|\nabla u_\varepsilon(\bar{x})|^3=(\tau-1)[(\bar{x}_{1}-x_{01})u_{\varepsilon, x_1}(\bar{x})+
  (\bar{x}_{2}-x_{02})u_{\varepsilon, x_2}(\bar{x})].
\end{equation*}
Write $\nu=(\nu_{x_1},\nu_{x_2})$ for the unit exterior normal vector to $\partial \Om$ at $\bar{x}$, it follows
\begin{equation*}
  -\tau\mathfrak{R}(\bar{x})|\nabla u_\varepsilon(\bar{x})|^3=(\tau-1)\frac{\partial u_\varepsilon}{\partial \nu}(\bar{x})[(\bar{x}_{1}-x_{01})\nu_{x_1}+
  (\bar{x}_{2}-x_{02})\nu_{x_2}].
\end{equation*}
Since $\Om$ is strictly star-shaped with respect to the point $x_0$, we have $(\bar{x}_{1}-x_{01})\nu_{x_1}+
  (\bar{x}_{2}-x_{02})\nu_{x_2}>0$. However, this is impossible, because $\tau\in [0,1]$, $\mathfrak{R}(\bar{x})>0$, $|\nabla u_\varepsilon(\bar{x})|>0$, and   $\frac{\partial u_\varepsilon}{\partial \nu}(\bar{x})<0$.
So we conclude the result.
\end{proof}
Arguing as in \cite[Lemma 2]{DGM}, we can prove the following lemma.
\begin{lemma}\label{lem2}
If $x\in \Om$ is such that $T(x)=\mathbf{0}$, then either
\begin{equation*}
  \text{$x$ is a critical point of $u_\varepsilon$},
\end{equation*}
or
\begin{equation*}
  \text{there exists $\theta\in [0,2\pi)$ such that $x\in M_\theta$}.
\end{equation*}
\end{lemma}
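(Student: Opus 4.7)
The plan is to perform a case split on whether $\nabla u_\varepsilon(x) = \mathbf{0}$ or not, and in the nontrivial case produce an explicit angle $\theta$ witnessing $x\in M_\theta$ by choosing $\mathbf{e}_\theta$ perpendicular to $\nabla u_\varepsilon(x)$.

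First I would rewrite the condition $T(x)=\mathbf{0}$ component-wise as the pair of identities
\[
u_{\varepsilon,x_2x_2}u_{\varepsilon,x_1}-u_{\varepsilon,x_1x_2}u_{\varepsilon,x_2}=0,\qquad u_{\varepsilon,x_1x_1}u_{\varepsilon,x_2}-u_{\varepsilon,x_1x_2}u_{\varepsilon,x_1}=0.
\]
If $\nabla u_\varepsilon(x)=\mathbf{0}$, there is nothing to prove: $x$ is a critical point and the first alternative holds.

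Assume now $\nabla u_\varepsilon(x)\neq\mathbf{0}$, and define $\theta\in[0,2\pi)$ by
\[
\mathbf{e}_\theta=\frac{1}{|\nabla u_\varepsilon(x)|}\bigl(-u_{\varepsilon,x_2}(x),\,u_{\varepsilon,x_1}(x)\bigr),
\]
so that $\mathbf{e}_\theta\perp\nabla u_\varepsilon(x)$. By construction $u_{\varepsilon,\theta}(x)=\langle\nabla u_\varepsilon(x),\mathbf{e}_\theta\rangle=0$, hence $x\in N_\theta$.

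It remains to check that $D^2u_\varepsilon(x)\cdot\mathbf{e}_\theta=\mathbf{0}$. Computing the two components of $D^2u_\varepsilon(x)\cdot\mathbf{e}_\theta$ and multiplying through by $|\nabla u_\varepsilon(x)|$, one gets exactly
\[
-\bigl(u_{\varepsilon,x_1x_1}u_{\varepsilon,x_2}-u_{\varepsilon,x_1x_2}u_{\varepsilon,x_1}\bigr)\quad\text{and}\quad u_{\varepsilon,x_2x_2}u_{\varepsilon,x_1}-u_{\varepsilon,x_1x_2}u_{\varepsilon,x_2},
\]
which are precisely the two components of $T(x)$ (up to sign) and therefore vanish by hypothesis. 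Thus $\nabla u_{\varepsilon,\theta}(x)=\mathbf{0}$, so $x\in M_\theta$, establishing the second alternative. There is no serious obstacle here — the lemma is essentially a linear-algebra observation that $T(x)=\mathbf{0}$ says the Hessian $D^2u_\varepsilon(x)$ has $\nabla u_\varepsilon(x)$ as an eigenvector (the rows of the Hessian are proportional to $\nabla u_\varepsilon(x)^\perp$), and the only thing to verify is that the orthogonal direction then lies in the kernel of $D^2u_\varepsilon(x)$.
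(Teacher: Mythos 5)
Your proof is correct and follows essentially the same route as the paper, which simply refers to \cite[Lemma 2]{DGM}: when $\nabla u_\varepsilon(x)\neq\mathbf{0}$ one takes $\mathbf{e}_\theta\perp\nabla u_\varepsilon(x)$ and checks that the components of $|\nabla u_\varepsilon(x)|\,D^2u_\varepsilon(x)\cdot\mathbf{e}_\theta$ coincide, up to sign, with those of $T(x)$. Only your final parenthetical aside is slightly off --- $T(x)=\mathbf{0}$ says the rows of the Hessian are proportional to $\nabla u_\varepsilon(x)$, i.e.\ orthogonal to $\nabla u_\varepsilon(x)^{\perp}$, not proportional to $\nabla u_\varepsilon(x)^{\perp}$ --- but this remark plays no role in the argument.
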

We point out that,  if $x\in M_\theta$, then up to a rotation, we can assume that
\begin{equation}\label{M}
 u_{\varepsilon,x_1}(x)= u_{\varepsilon,x_1x_1}(x) = u_{\varepsilon,x_1x_2}(x) = 0,\quad u_{\varepsilon,x_2}(x)\neq 0.
\end{equation}
If $x$ is an isolated zero of $T$,  for any $r>0$ small enough, we denote by ind$(T,x)=\deg (T,B(x,r),\mathbf{0})$ the Browner degree of $T$ in a ball of ${\R}^2$ centered at $x$ with radius $r$.
\begin{lemma}\label{lem3}
Let $x\in \Om$ be such that $T(x)=\mathbf{0}$, we have

$(i)$ If $x$ is a non-degenerate critical point of $u_\varepsilon$, then  ind$(T,x)=1$.

$(ii)$ If $x\in M_\theta$ for some $\theta\in [0,2\pi)$ and it is a non-degenerate critical point of $u_{\varepsilon,\theta}$, then  ind$(T,x)=-1$.
\end{lemma}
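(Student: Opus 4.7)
The plan is to compute the Jacobian matrix $DT(x)$ at the isolated zero $x$ in each case and conclude via the standard identity $\mathrm{ind}(T,x)=\mathrm{sign}\det DT(x)$, valid whenever $DT(x)$ is non-singular.

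For part $(i)$, an entry-by-entry expansion of $DT$ via the product rule shows that every term in $\partial T_i/\partial x_j$ either carries a factor $u_{\varepsilon,x_k}$ (coming from differentiating a second derivative of $u_\varepsilon$) or is a product of two second derivatives (coming from differentiating a first derivative of $u_\varepsilon$). At a critical point the first-derivative factors vanish and the survivors collapse to
\[
DT(x)=\bigl(\det D^2 u_\varepsilon(x)\bigr)\,I_{2\times 2},
\]
so $\det DT(x)=(\det D^2 u_\varepsilon(x))^2>0$ by non-degeneracy, giving $\mathrm{ind}(T,x)=+1$.

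For part $(ii)$, after the rotation leading to \ref{M} we have $u_{\varepsilon,x_1}=u_{\varepsilon,x_1x_1}=u_{\varepsilon,x_1x_2}=0$ and $u_{\varepsilon,x_2}(x)\neq 0$. The same expansion, with all vanishing first and second derivatives removed, gives
\[
DT(x)=u_{\varepsilon,x_2}(x)\begin{pmatrix}-u_{\varepsilon,x_1x_1x_2}(x)&-u_{\varepsilon,x_1x_2x_2}(x)\\ u_{\varepsilon,x_1x_1x_1}(x)& u_{\varepsilon,x_1x_1x_2}(x)\end{pmatrix},
\]
so $\det DT(x)=u_{\varepsilon,x_2}(x)^2\,\det D^2 u_{\varepsilon,x_1}(x)$. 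The task therefore reduces to showing that $\det D^2 u_{\varepsilon,x_1}(x)<0$.

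To extract this sign I evaluate the linearized equation \ref{f1} (with $\theta=0$, so that $u_{\varepsilon,\theta}=u_{\varepsilon,x_1}$) at the point $x$. Every term containing a factor $u_{\varepsilon,x_1}$, $\nabla u_\varepsilon\cdot\nabla u_{\varepsilon,x_1}$, or $u_{\varepsilon,x_1,x_i}$ drops out, and substituting $\nabla u_\varepsilon(x)=(0,u_{\varepsilon,x_2}(x))$ together with $\Delta u_{\varepsilon,x_1}=u_{\varepsilon,x_1x_1x_1}+u_{\varepsilon,x_1x_2x_2}$ reduces what remains to
\[
\bigl(\varepsilon^2+u_{\varepsilon,x_2}^2\bigr)u_{\varepsilon,x_1x_1x_1}(x)+\bigl(\varepsilon^2+(p-1)u_{\varepsilon,x_2}^2\bigr)u_{\varepsilon,x_1x_2x_2}(x)=0.
\]
Both bracketed coefficients are strictly positive, so $u_{\varepsilon,x_1x_1x_1}(x)$ and $u_{\varepsilon,x_1x_2x_2}(x)$ have opposite signs. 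Consequently
\[
\det D^2 u_{\varepsilon,x_1}(x)=u_{\varepsilon,x_1x_1x_1}u_{\varepsilon,x_1x_2x_2}-u_{\varepsilon,x_1x_1x_2}^2\leq 0,
\]
and the assumed non-degeneracy of $u_{\varepsilon,x_1}$ at $x$ upgrades this to strict inequality, giving $\mathrm{ind}(T,x)=-1$.

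The main obstacle is this last sign-extraction step: the assembly of $DT$ is linear-algebraic bookkeeping, but turning \ref{f1} at a point of $M_\theta$ into a useful scalar identity between third derivatives of $u_\varepsilon$ requires carefully tracking which terms of the linearized PDE survive under the constraints $u_{\varepsilon,x_1}(x)=0$ and $\nabla u_{\varepsilon,x_1}(x)=\mathbf 0$, and then noting that the surviving coefficients are of definite sign. Part $(i)$ is then comparatively routine, amounting only to the observation that $DT$ at an ordinary critical point is a scalar multiple of the identity.
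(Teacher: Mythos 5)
Your proposal is correct and follows essentially the same route as the paper: compute $\det \mathrm{Jac}_T(x)$ under the normalization \ref{M}, evaluate the differentiated equation \ref{f1} at $x$ to get $(\e^2+|\nabla u_\e|^2)u_{\e,x_1x_1x_1}+(\e^2+|\nabla u_\e|^2+(p-2)u_{\e,x_2}^2)u_{\e,x_1x_2x_2}=0$, and conclude negativity of the determinant from non-degeneracy (part $(i)$ being the standard computation the paper delegates to \cite[Lemma 3]{DGM}). Only cosmetic remark: instead of saying the two third derivatives ``have opposite signs'' (they could both vanish), say their product is $\le 0$, which is all your final display uses.
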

\begin{proof}
The proof of $(i)$ is similar to \cite[Lemma 3]{DGM}.
If $x\in M_\theta$, by \ref{M}, we have
\begin{align*}
  &\det \text{Jac}_T(x)\\
  =&-u_{\varepsilon,x_2}^2(x)
  \big[u_{\varepsilon,x_1x_1x_2}^2(x)-u_{\varepsilon,x_1x_1x_1}(x)u_{\varepsilon,x_1x_2x_2}(x)\big]\\
  =&-u_{\varepsilon,x_2}^2(x)\Big[u_{\varepsilon,x_1x_1x_2}^2(x)+\frac{\varepsilon^2+|\nabla u_\varepsilon|^2+(p-2)u_{\varepsilon,x_2}^2(x)}{\varepsilon^2+|\nabla u_\varepsilon|^2}u_{\varepsilon,x_1x_2x_2}^2(x)\Big],
\end{align*}
where the last equality follows from \ref{f1}. Since $x$ is a non-degenerate critical point of $u_{\varepsilon,\theta}$, then
\begin{equation*}
  u_{\varepsilon,x_1x_1x_2}^2(x)-u_{\varepsilon,x_1x_1x_1}(x)u_{\varepsilon,x_1x_2x_2}(x)\neq0.
\end{equation*}
Combining with $\varepsilon^2+|\nabla u_\varepsilon|^2+(p-2)u_{\varepsilon,x_2}^2(x)>0$, we obtain the result.
\end{proof}

\noindent{\bf \emph{Proof of Theorem \ref{th2}}}\ \ \  By Proposition \ref{prop1}, we have $M_\theta\cap \Omega=\emptyset$ for any $\theta\in [0,2\pi)$.
If  $T(x)=\mathbf{0}$, by Lemmas \ref{lem1} and \ref{lem2}, we know that $x\in \Om$, and it is a critical point of $u_\varepsilon$. Moreover, using Proposition \ref{prop1} again, we have that $x$ is non-degenerate. 

Finally, by Lemmas \ref{lem1} and \ref{lem3}, we have
\begin{equation*}
\sharp \text{\{critical points of $u_\varepsilon$\}}=\sum\limits_{\text{$x\in \Om$ such that $\nabla u_\varepsilon(x)=\mathbf{0}$}}\text{ind}(T,x)=\deg(T,\Om,\mathbf{0})=1,
\end{equation*}
which gives the desired. \qed

\vspace{.3cm}

Obviously, we have the following corollary.
\begin{cor}\label{coro}
Let $D\subset \bar{\Om}$ be such that $\mathbf{0}\not\in T(\partial D)$ and $\deg (T,D,\mathbf{0})=1$. If $M_\theta\cap D=\emptyset$ for any $\theta\in [0,2\pi)$, then $u_\varepsilon$
has exactly one critical point in $D$, which is a non-degenerate maximum point.
\end{cor}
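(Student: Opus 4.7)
The plan is to mirror the proof of Theorem \ref{th2}, but localized to the subdomain $D$, using the same degree-counting argument and the already established properties of $T$, $N_\theta$, and $M_\theta$. The two hypotheses on $D$, namely $\mathbf{0} \notin T(\partial D)$ and $\deg(T, D, \mathbf{0}) = 1$, are exactly what is needed to run the degree formula on $D$ in place of $\Om$, while the assumption $M_\theta \cap D = \emptyset$ for every $\theta$ rules out the bad alternative in Lemma \ref{lem2}.

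More concretely, I would first look at any $x \in D$ with $T(x) = \mathbf{0}$. By Lemma \ref{lem2}, either $x$ is a critical point of $u_\varepsilon$ or $x \in M_\theta$ for some $\theta \in [0, 2\pi)$; the second option is excluded by hypothesis, so every zero of $T$ in $D$ is a critical point of $u_\varepsilon$. By Proposition \ref{prop1}, such a critical point is automatically non-degenerate (the Hessian has rank $2$). Then Lemma \ref{lem3}(i) applies and gives $\mathrm{ind}(T, x) = 1$ at each such $x$. In particular, each zero of $T$ in $D$ is isolated, so the standard decomposition
\begin{equation*}
\deg(T, D, \mathbf{0}) = \sum_{x \in D,\ T(x) = \mathbf{0}} \mathrm{ind}(T, x)
\end{equation*}
is available, and combined with $\deg(T, D, \mathbf{0}) = 1$ forces exactly one critical point of $u_\varepsilon$ to lie in $D$.

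It remains to see that this unique critical point is a non-degenerate maximum. Since we have already placed ourselves under the hypotheses of Theorem \ref{th1}, Theorem \ref{th2} applies globally in $\Om$ and says that $u_\varepsilon$ admits a unique critical point in $\Om$, which is a non-degenerate maximum. The critical point we just located in $D$ must therefore coincide with that global maximum, which finishes the proof.

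I do not anticipate a real obstacle here: the work was all done in Proposition \ref{prop1} and Lemmas \ref{lem1}--\ref{lem3}. The only subtlety to get right is that the degree-sum formula needs the zero set of $T$ in $D$ to be finite, and this is guaranteed as soon as every such zero is a non-degenerate critical point of $u_\varepsilon$, which is exactly what $M_\theta \cap D = \emptyset$ together with Proposition \ref{prop1} provides.
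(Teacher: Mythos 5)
Your degree-counting half is exactly the intended argument: identify the zeros of $T$ in $D$ with critical points of $u_\varepsilon$ via Lemma \ref{lem2} and the hypothesis $M_\theta\cap D=\emptyset$, apply Lemma \ref{lem3}$(i)$ to get index $+1$, and use additivity of the degree together with $\deg(T,D,\mathbf{0})=1$. The gap is in where you take non-degeneracy and, above all, the ``maximum'' conclusion from: you invoke Proposition \ref{prop1} and Theorem \ref{th2}, both of which are proved only under the assumptions of Theorem \ref{th1} (strictly positive curvature, through Lemma \ref{lem1} and Property 2). But the whole reason this corollary is stated separately is that it is applied later, in the proofs of Theorems \ref{th3} and \ref{th4}, under the assumptions of Theorem \ref{th2'}, where the curvature vanishes somewhere and Theorem \ref{th2}, Lemma \ref{lem1} and Proposition \ref{prop1} are not available (there one must work on $D=\Om_\rho$ precisely because the global degree argument on $\Om$ fails; non-degeneracy comes instead from Propositions \ref{prop2} and \ref{prop3}). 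As written, your proof establishes the corollary only in a situation where it is already subsumed by Theorem \ref{th2}, and its later use in Section \ref{sec4} would become circular.

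Both points can be repaired using only the stated hypotheses, which is what the paper implicitly does. For non-degeneracy you do not need Proposition \ref{prop1}: if $x\in D$ were a critical point of $u_\varepsilon$ with degenerate Hessian, choose $\theta$ with $\mathbf{e_\theta}$ in the kernel of $D^2u_\varepsilon(x)$; then $u_{\varepsilon,\theta}(x)=\langle\nabla u_\varepsilon(x),\mathbf{e_\theta}\rangle=0$ and $\nabla u_{\varepsilon,\theta}(x)=D^2u_\varepsilon(x)\cdot\mathbf{e_\theta}=\mathbf{0}$, i.e. $x\in M_\theta\cap D$, contradicting the hypothesis. For the ``maximum'' part, note that the index alone cannot do the job: at any non-degenerate critical point the Jacobian of $T$ equals $\det D^2u_\varepsilon(x)$ times the identity, so $\mathrm{ind}(T,x)=1$ whether $x$ is a maximum or a saddle; hence one needs the additional information, implicit in how the corollary is applied, that $\nabla u_\varepsilon\neq\mathbf{0}$ on $\bar{\Om}\setminus D$. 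Since $u_\varepsilon>0$ in $\Om$ and $u_\varepsilon=0$ on $\partial\Om$, the absolute maximum of $u_\varepsilon$ is attained in the interior and is a critical point, hence lies in $D$ and coincides with the unique critical point found by the degree count; non-degeneracy then forces the Hessian there to be negative definite. This intrinsic argument, rather than an appeal to Theorem \ref{th2}, is what keeps the corollary usable in the vanishing-curvature setting.
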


To prove Theorem \ref{th1}, we give the following definition.
\begin{definition}
A set $S\subset \Om$ is said to be a minima (maxima) set of $u$, if there exists $T\supset S$ such that $u|_{T\backslash S}>(<)u|_S$.
\end{definition}

\noindent{\bf \emph{Proof of Theorem \ref{th1}}}\ \ \
Even if $u_\varepsilon$ has only one critical point, pass to the limit as $\varepsilon\rightarrow 0$, the solution $u$ can have many critical points. 
 However, some cases can be excluded.
 Denote the unique critical point of $u_\varepsilon$ by $x_{\varepsilon,max}$, which is a non-degenerate maximum point. Then  
$x_{\varepsilon,max}$ converges to a point $x_{max}$ as $\varepsilon\rightarrow0$, and $x_{max}$ is a  maximum point of $u$.
In view of $\nabla u\neq\mathbf{0}$ on $\partial \Om$,  
 the critical points of $u$ can only be in $\Om$.

\vspace{.1cm}

 $\bullet$ {\em Internal absolute minima}

 Since $u\in C^1(\bar{\Om})$, $u>0$ in $\Om$ and $u=0$ on $\partial \Om$, it is obvious that $u$ has no internal absolute minima in $\Om$.

\vspace{.1cm}

 $\bullet$ {\em Internal relative minima}

\vspace{.1cm}


 {\em Case 1: $x_0$ is an isolated minimum point of $u$, that is, there exists $\delta>0$ such that for any ${x}\in U_{{\delta}}({x}_0)$, $u({x})> u({x}_0)$.} Since  $u_\e\to u$ in $C^1(\bar{\Om})$ as $\varepsilon\rightarrow 0$, for any ${x}\in U_{{\delta}}({x}_0)$,  we have
 \begin{align*}
   u_\varepsilon(x)- u_\varepsilon({x}_0)=&\underbrace{u_\varepsilon(x)- u({x})}_{\rightarrow0}+\underbrace{u(x)-u(x_0)}_{>0}+\underbrace{u(x_0)- u_\varepsilon({x}_0)}_{\rightarrow0}> 0,
 \end{align*}
 as $\varepsilon\rightarrow0$, which implies that ${x}_0$ is a relative minimum point of $u_\varepsilon$, this is a contradiction, because $u_\e$ has only one absolute maximum point.

{\em Case 2: there exists a domain $S\subset \Om$ containing $x_0$ (at least one line)  such that $S$ is a minima set of $u$.} Then there exists a domain $T\supset S$ such that $u|_{T\backslash S}>u|_S=u(x_0)$. Hence, we have
\begin{equation*}
  u_\varepsilon|_{T\backslash S}>u_\varepsilon(x_0),
\end{equation*}
and
\begin{equation*}
  u_\varepsilon|_{S}- u_\varepsilon(x_0)\rightarrow 0,
\end{equation*}
as $\varepsilon\rightarrow0$.
This proves that $u_\varepsilon$ admits a relative minimum point in $S$, a contradiction. 

\vspace{.1cm}

 $\bullet$ {\em Internal relative maxima}

\vspace{.1cm}

 If there exists a relative maximum point ${x_0}$ of $u$ with $u({x_0})<||u||_\infty$, similar as above, there exists $x_\varepsilon \in \Om$ 
 which locally
 maximizes $u_\e$.
Moreover, either $x_\varepsilon=x_0$ or $x_\varepsilon \in S$ (where $S$ is a maxima set of $u$)  such that $u_\varepsilon(x_\varepsilon)- u_\varepsilon(x_0)\rightarrow0$ as $\varepsilon\rightarrow 0$.
So
we have
 \begin{align*}
   u_\e({x_\varepsilon})-||u_\e||_\infty=&\underbrace{u_\varepsilon(x_\varepsilon)-u_\varepsilon(x_0)}_{=0 \ \text{or} \ \rightarrow0}+\underbrace{u_\varepsilon(x_0)-u(x_0)}_{\rightarrow0}\\
   &+\underbrace{u(x_0)-||u||_\infty}_{<0}+\underbrace{||u||_\infty-||u_\varepsilon||_\infty }_{\rightarrow0}<0, \quad \text{as $\varepsilon\rightarrow0$},
 \end{align*}
 which implies $u_\e({x_\varepsilon})<||u_\e||_\infty$,
 a contradiction.

\vspace{.1cm}

 $\bullet$  {\em Saddle point}

\vspace{.1cm}

 If there exists an {isolated} saddle point ${x_0}$ of $u$, then $x_0$ is unstable, i.e., ind$(\nabla u,x_0)=0$. Otherwise, by the definition of the index, $u_\varepsilon$ has a critical point  $x_\varepsilon$ close to $x_0$,
 leading to a contradiction.

\vspace{.1cm}

 {\bf Therefore, the critical points of $u_\varepsilon$ has only the internal absolute maxima and possibly saddle points with zero index.} Denote $Argmax(u)$ the set of all absolute maxima for $u$. By the result of \cite[Theorem 1.1]{L} or \cite[Corollary 1.7]{ACF}, the Lebesgue measure of $Argmax(u)$ is zero.

Note that, from the non-degeneracy of the maximum point $x_{\varepsilon,max}$, $u_\varepsilon$ is strictly concave
in the domain close to $x_{\varepsilon,max}$. In particular, the super-level sets of $u_\varepsilon$ are strictly convex
in this domain. More rigorously, for any $\iota> 0$ small enough, there exists a small parameter $\delta_\iota>0$ such that
\begin{equation}\label{5}
  u_\e(\lambda x+(1-\lambda)y)> \lambda u_\e(x)+(1-\lambda)u_\e(y),
\end{equation}
for any  $x,y\in U_\iota$ and $\lambda\in (0,1)$, where
\begin{equation}\label{6}
U_\iota=\{x\in \Om: u_\e(x)>||u_\e||_\infty-\delta_\iota\}
\end{equation}
is strictly convex.

\vspace{.1cm}

$\bullet$ {\em Case 1: $\delta _\iota\to0$ as $\iota\rightarrow 0$.}

\vspace{.1cm}

Passing to the limit and using the uniform convergence of $u_\varepsilon$ to $u$, we deduce from \ref{5} and \ref{6} that
\begin{equation*}
  u(\lambda x+(1-\lambda)y)\ge \lambda u(x)+(1-\lambda)u(y),\quad\hbox{for any $x,y\in \tilde{U}$ and $\lambda\in (0,1)$},
\end{equation*}
where
\begin{equation*}
\tilde{U}=\{x\in \Om: u(x)=||u||_\infty\}.
\end{equation*}
For any $x,y\in Argmax(u)=\tilde{U}$ and $\lambda\in (0,1)$, we have
\begin{equation*}
 ||u||_\infty\geq  u(\lambda x+(1-\lambda)y)\ge \lambda u(x)+(1-\lambda)u(y)=||u||_\infty,
\end{equation*}
which means $\lambda x+(1-\lambda)y \in Argmax(u)$.
So $Argmax(u)$ is convex. 

We claim that $Argmax(u)$ is a point or segment.
If $Argmax(u)$ contains only one point, then the claim follows. If $Argmax(u)$ contains at least two points $x$ and $y$, denote the segment connecting $x$ and $y$ by $[x,y]$, then $Argmax(u)=[x,y]$. In fact, 
 for any $z\in [x,y]$, we have
\begin{equation*}
  ||u||_\infty\geq u(z) \geq \min\{u(x),u(y)\}=||u||_\infty,
\end{equation*}
this gives $[x,y]\subset Argmax(u)$. On the other hand, if there exists another point $\xi\in Argmax(u)$ but $\xi\not \in [x,y]$, then by the convexity of $Argmax(u)$ and $[x,y]\subset Argmax(u)$, we obtain
\begin{equation*}
 \Delta xy\xi \subset Argmax(u),
\end{equation*}
which is a contradiction,
since the Lebesgue measure of $Argmax(u)$ is zero. Thus, $Argmax(u)= [x,y]$.

\vspace{.1cm}

$\bullet$ {\em Case 2: $0<\delta<\delta_\iota<2\delta$ for some fixed small $\delta>0$.}

\vspace{.1cm}

It follows from \ref{5} and \ref{6} that
\begin{equation*}
  u(\lambda x+(1-\lambda)y)\ge \lambda u(x)+(1-\lambda)u(y),\quad\hbox{for any $x,y\in \bar{U}$ and $\lambda\in (0,1)$},
\end{equation*}
where
\begin{equation*}
\bar{U}=\{x\in \Om: u(x)\geq ||u||_\infty-\delta_*\},
\end{equation*}
and $\delta_*=\lim\limits_{\iota\rightarrow0} \delta_\iota \in [\delta,2\delta]$.  For any $x,y\in Argmax(u)\subset \bar{U}$ and $\lambda\in (0,1)$, we have
\begin{equation*}
 ||u||_\infty\geq  u(\lambda x+(1-\lambda)y)\ge \lambda u(x)+(1-\lambda)u(y)=||u||_\infty,
\end{equation*}
which means $\lambda x+(1-\lambda)y \in Argmax(u)$.
So $Argmax(u)$ is convex, and $Argmax(u)$ is a point or segment.
\qed

\section{Proof of Theorem \ref{th2'}}\label{sec4}

First, we consider the case where the curvature of the boundary vanishes at isolated points.
By the compactness of $\partial \Om$ and $u_\varepsilon\in C^2(\bar{\Om})$, we know that the curvature vanishes
only at finitely many points of $\partial \Om$. Without loss of generality, we assume that $\Om$ is a smooth bounded domain such that the curvature is zero at a single point of the boundary and positive elsewhere. Up to a rotation and translation, we assume $\Om\subset \{(x_1,x_2)\in {\R}^2:x_2<0\}$ such that $\partial \Om$ is tangent to the $x_1$-axis at $0$, 
which is the only point where the curvature is zero. 

For any fixed $\varepsilon\in (-\varepsilon_0,\varepsilon_0)$, let $u_\varepsilon$ be given in Proposition \ref{toget}, then
\begin{thm}\label{th3}
Under the assumptions of Theorem \ref{th2'}, $u_\varepsilon$ has a unique critical point, which is a non-degenerate maximum point. 
\end{thm}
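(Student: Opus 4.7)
The plan is to mirror the proof of Theorem \ref{th2} step by step, adapting only the two places where the strict positivity of the boundary curvature was essential. Concretely, the infrastructure --- the nodal sets $N_\theta$, the critical directional sets $M_\theta$, the map $T$, the Hopf-type conclusion $\nabla u_\varepsilon\neq\mathbf{0}$ on $\partial\Om$ (still valid on convex domains with possibly vanishing boundary curvature by the generalized Hopf lemma, cf.\ \cite{Sa}), and the index computations of Lemmas \ref{lem2} and \ref{lem3} --- all carry over unchanged. Properties 1, 3 and 4 of Proposition \ref{prop1} are interior statements relying on the PDE, the strong unique continuation theorem and the stability of $u_\varepsilon$, so they remain valid. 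The two steps requiring new input are (a) Property 2 of Proposition \ref{prop1} at the single tangent point $0\in\partial\Om$, and (b) Lemma \ref{lem1} at the same point.

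For (a), at every boundary point with strictly positive curvature the argument of \cite{CC} applies verbatim. At the origin I would exploit the boundary condition $u_\varepsilon=0$ together with the tangency of $\partial\Om$ to the $x_1$-axis and the isolated vanishing of the curvature. A Taylor expansion of the boundary parametrization $x_2=g(x_1)$ gives $g(0)=g'(0)=g''(0)=0$, which combined with $u_\varepsilon(x_1,g(x_1))\equiv 0$ forces $u_{\varepsilon,x_1}(0)=0$ and $u_{\varepsilon,x_1x_1}(0)=0$; the PDE \ref{2} at the origin then determines $u_{\varepsilon,x_2x_2}(0)$ in terms of $f(0)$ and $u_{\varepsilon,x_2}(0)$. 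Using these relations and the isolation of the zero-curvature point, one shows that $N_\theta$ still meets $\partial\Om$ transversally at $0$ (if it meets there at all) and that $M_\theta\cap\partial\Om=\emptyset$, which together with Property 4 gives $M_\theta\cap\bar\Om=\emptyset$.

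For (b), the homotopy $H(t,x)=tT(x)+(1-t)(x-x_0)$ of Lemma \ref{lem1} fails only at $(t,x)=(1,0)$: the key identity $T(x)\cdot\nu(x)=\mathfrak{R}(x)|\nabla u_\varepsilon|^2$ together with the strict star-shapedness of the convex domain $\Om$ with respect to any interior $x_0$ guarantees non-vanishing of $H$ at every other boundary point and every $t\in[0,1]$. To eliminate this last degeneracy I would excise a small semi-disc, setting $\Om_\delta=\Om\setminus\overline{B(0,\delta)}$. On the portion of $\partial\Om_\delta$ lying on $\partial\Om$, the curvature is bounded below by a positive constant (for any fixed $\delta>0$, since the origin is the only zero-curvature point) so the original argument applies; on the inner arc $\partial B(0,\delta)\cap\Om$ one shows $T\neq\mathbf{0}$ using a local Taylor expansion of $u_\varepsilon$ near the origin combined with the vanishing relations established in (a). Since by the Hopf lemma $\nabla u_\varepsilon\neq\mathbf{0}$ in a neighbourhood of $\partial\Om$, every critical point of $u_\varepsilon$ lies in $\Om_\delta$ for all sufficiently small $\delta$, hence $\deg(T,\Om,\mathbf{0})=\deg(T,\Om_\delta,\mathbf{0})=1$ by the admissible homotopy to the radial field $x-x_0$ on $\Om_\delta$.

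The main obstacle, which carries essentially all the new technical work, is establishing $T\neq\mathbf{0}$ on the inner arcs $\partial B(0,\delta)\cap\Om$ uniformly for small $\delta>0$. This reduces to showing that the leading-order terms in a Taylor expansion of $T$ near the origin cannot vanish simultaneously --- a computation that relies on the isolation of the zero-curvature point, the specific structure of the $p$-Laplacian in \ref{2}, and the derivative relations fixed in (a). Once this is secured the final assembly is immediate: Lemma \ref{lem2} together with the adapted Property 2 forces every zero of $T$ in $\Om$ to be a critical point of $u_\varepsilon$, Proposition \ref{prop1} gives non-degeneracy, Lemma \ref{lem3}(i) assigns index $+1$ to each such critical maximum, and summing the indices with $\deg(T,\Om,\mathbf{0})=1$ yields exactly one critical point, which is necessarily a non-degenerate maximum.
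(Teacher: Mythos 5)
Your overall scaffolding (reuse $N_\theta$, $M_\theta$, $T$, Lemmas \ref{lem2}--\ref{lem3}, excise a small ball around the flat boundary point and run the degree argument there) matches the paper's strategy, but the proposal has a genuine gap exactly where the new work lies, and one of your intermediate claims is not available. In step (a) you assert that at the tangency point $\mathbf{0}$ one can show $M_\theta\cap\partial\Om=\emptyset$, hence $M_\theta\cap\bar\Om=\emptyset$. This cannot be established in general: at $\mathbf{0}$ one automatically has $u_{\varepsilon,x_1}(\mathbf{0})=0$ and, from the vanishing curvature, $u_{\varepsilon,x_1x_1}(\mathbf{0})=0$, so whenever $u_{\varepsilon,x_1x_2}(\mathbf{0})=0$ the point $\mathbf{0}$ lies in $M_0\cap\partial\Om$. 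The paper does not rule this out; it splits into the two cases $u_{\varepsilon,x_1x_2}(\mathbf{0})\neq 0$ (where $T(\mathbf{0})\neq\mathbf{0}$ and the degree is computed on all of $\Om$, no excision needed) and $u_{\varepsilon,x_1x_2}(\mathbf{0})=0$ (where $T(\mathbf{0})=\mathbf{0}$ on $\partial\Om$, the degree on $\Om$ is not even defined, and one works on $\Om_\rho=\Om\setminus\bar U_\rho$, applying Corollary \ref{coro} with $D=\Om_\rho$ only). Your claim that the relations at $\mathbf{0}$ plus isolation of the zero-curvature point ``show'' the needed non-degeneracy is precisely the statement that needs proof and is, as stated, false in the second case.

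Second, you defer the entire hard part --- non-vanishing of the homotopy near $\mathbf{0}$ --- to an unexecuted ``Taylor expansion of $T$'', and in fact you only propose to show $T\neq\mathbf{0}$ on the inner arc, which is not enough: admissibility of the homotopy $tT(x)+(1-t)(x-x_0)$ requires $H(t,x)\neq\mathbf{0}$ for all intermediate $t$ on that arc as well. The paper's resolution is not a local expansion of $T$: it is Lemma \ref{auxi}, asserting $\mathfrak{R}_{x_2}(\mathbf{0})<0$ when $u_{\varepsilon,x_1x_2}(\mathbf{0})=0$, whose proof uses the global structure of the nodal set $\{u_{\varepsilon,x_1}=0\}$ furnished by Proposition \ref{prop2} (hence ultimately stability and Property~4), Bers' theorem on the local homogeneous-polynomial behaviour to exclude $u_{\varepsilon,x_1x_1x_2}(\mathbf{0})=0$, a parametrization of the nodal branch entering $\Om$ at $\mathbf{0}$, and a one-sided monotonicity argument giving the sign. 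Admissibility on $\partial\Om_\rho$ then follows by contradiction: a failure point $x_\rho$ must satisfy $\mathfrak{R}(x_\rho)\leq 0$, and a mean-value argument on $\mathfrak{R}$ along vertical segments forces $\mathfrak{R}_{x_2}(\mathbf{0})\geq 0$, contradicting Lemma \ref{auxi}. None of this curvature-derivative/nodal-set machinery appears in your proposal, so the step you yourself flag as carrying ``essentially all the new technical work'' is missing rather than sketched.
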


Similar to Proposition \ref{prop1}, we have
\begin{prop}\label{prop2}
Under the assumptions of Theorem \ref{th2'}, 
for
any $\theta\in [0,2\pi)$, the nodal set $N_\theta$ is a $C^2$ curve in $\bar \Om$ without self-intersection, which hits $\partial \Om$ at the two end points of $N_\theta$.
Moreover, in any critical point of $u_\varepsilon$, the Hessian has rank $2$.
\end{prop}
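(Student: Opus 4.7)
The plan is to adapt the proof of Proposition \ref{prop1} to the weaker curvature hypothesis, by reusing its four-property skeleton and modifying only the step where strict positivity of the curvature was essential. By the reduction set up at the beginning of Section \ref{sec4}, I may assume $\mathfrak{R}$ vanishes only at $0\in\partial\Omega$, and I aim to verify that Properties 1--4 from the proof of Proposition \ref{prop1} still hold for every $\theta\in[0,2\pi)$, together with the Hessian rank~$2$ property at critical points.

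Three of the four properties should transfer with essentially no change. Property~1 (the local $C^2$ structure of $N_\theta$ at interior points outside $M_\theta$) follows from the implicit function theorem applied to $u_{\varepsilon,\theta}$. Property~3 (branching at a point of $M_\theta\cap\Omega$) uses only the interior ellipticity estimates \ref{eq12}--\ref{eq3}, the Bers-type theorem applied to the linearized equation \ref{f1}, and the strong unique continuation principle; all of these are insensitive to the curvature of $\partial\Omega$. Property~4 (non-enclosure of subdomains) relies solely on the stability of $u_\varepsilon$ and the linearized equation. Consequently these arguments, as well as the rank~$2$ statement at interior critical points (via the Bers expansion on $u_{\varepsilon,\theta}$), carry over verbatim.

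The main obstacle is therefore the analog of Property~2: showing $M_\theta\cap\partial\Omega=\emptyset$ and that $N_\theta$ meets $\partial\Omega$ transversally at exactly two points. The Cabr\'e--Chanillo argument rests on the identity $u_{\varepsilon,\theta\theta}(p)=-\mathfrak{R}(p)\,u_{\varepsilon,\nu}(p)$ together with Hopf to force $u_{\varepsilon,\theta\theta}(p)\neq 0$ at each boundary tangency point of direction $e_\theta$. This identity remains effective for every $\theta$ whose tangency points on $\partial\Omega$ all lie in the positive-curvature part, so only the at most two exceptional values $\theta\in\{0,\pi\}$ for which $0$ is a tangency point need special treatment. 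At such a $\theta$ one has $u_{\varepsilon,x_1}(0)=u_{\varepsilon,x_1x_1}(0)=0$ automatically, so the new task is to rule out $0\in M_\theta$, which amounts to showing $u_{\varepsilon,x_1x_2}(0)\neq 0$, and to describe $N_\theta$ locally at $0$.

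To address this I would expand the boundary condition $u_\varepsilon(t,g(t))\equiv 0$ to high order (using $g(0)=g'(0)=g''(0)=0$) to obtain $u_{\varepsilon,x_1x_1x_1}(0)=-g'''(0)\,u_{\varepsilon,x_2}(0)$, and combine it with the differentiated equation \ref{f1} in the $e_1$ direction evaluated at~$0$, which couples $u_{\varepsilon,x_1x_1x_1}(0)$ and $u_{\varepsilon,x_1x_2x_2}(0)$ to $u_{\varepsilon,x_1x_2}(0)$. A Bers-type description of the zero set of $u_{\varepsilon,x_1}$ at the boundary point $0$ should then yield $u_{\varepsilon,x_1x_2}(0)\neq 0$ and hence the desired local $C^2$ structure of $N_\theta$ meeting $\partial\Omega$ only at $0$ and at the companion tangency point of positive curvature. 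The hardest part will be to make this argument robust when $u_{\varepsilon,x_1x_2}(0)$ could itself vanish for hidden symmetry or degeneracy reasons; in that scenario the fallback plan is to approximate $\Omega$ by strictly convex domains $\Omega_n$ (perturbing $g$ so that $g''>0$), apply Proposition~\ref{prop1} to the regularized solutions $u_{\varepsilon,n}$, and pass to the limit using the uniform $C^{1,\beta}$ bounds and the preserved stability, checking that $N_\theta^{(n)}$ converges to a single $C^2$ arc without self-intersections.
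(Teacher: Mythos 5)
Your reduction and the treatment of $\theta\in(0,\pi)\cup(\pi,2\pi)$, as well as the observation that Properties 1, 3, 4 and the rank-$2$ statement are curvature-insensitive, match the paper. The gap is in your key new step for $\theta=0,\pi$: you try to prove $u_{\varepsilon,x_1x_2}(\mathbf{0})\neq 0$, i.e.\ $\mathbf{0}\notin M_\theta$, and this is false in general. If, for instance, $\Om$ is symmetric with respect to the $x_2$-axis, then $u_\varepsilon$ inherits the symmetry (by uniqueness in the implicit function theorem construction), $u_{\varepsilon,x_1}$ is odd in $x_1$, and $u_{\varepsilon,x_1x_2}$ vanishes identically on the axis, in particular at $\mathbf{0}$. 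The paper is built around precisely this possibility: the case $u_{\varepsilon,x_1x_2}(\mathbf{0})=0$ is not excluded but handled later (Lemma \ref{auxi} and Case 2 in the proof of Theorem \ref{th3}, where the degree is computed on $\Om_\rho=\Om\setminus\bar U_\rho$). Note also that Proposition \ref{prop2} does not assert $M_\theta\cap\partial\Om=\emptyset$; what is needed is only that $N_\theta$ is a single $C^2$ arc meeting $\partial\Om$ at two points. In the degenerate case the Taylor expansion of $u_{\varepsilon,x_1}$ at $\mathbf{0}$ starts with the quadratic term $u_{\varepsilon,x_1x_1x_2}(\mathbf{0})\,x_1x_2$ (one obtains $u_{\varepsilon,x_1x_1x_1}(\mathbf{0})=u_{\varepsilon,x_1x_2x_2}(\mathbf{0})=0$ from the flatness of $\partial\Om$ and from \ref{f1}), so the local nodal picture is two crossing lines, one tangent to $\partial\Om$ and one entering $\Om$; the correct argument, as in the paper and in \cite[Proposition 1]{DGM}, combines this Bers-type expansion with the global constraints (Property 4, non-enclosure via stability, and the count of boundary intersections) to show that exactly one branch enters $\Om$, rather than ruling the degeneracy out.

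Your fallback plan of approximating $\Om$ by strictly convex domains $\Om_n$ does not repair this. The stable solution $u$ is given only on $\Om$, and the whole approximation scheme of Section \ref{sec2} is anchored at it; you have no stable solutions $u_{\varepsilon,n}$ on $\Om_n$ to which Proposition \ref{prop1} could be applied, and even granting their existence, uniform $C^{1,\beta}$ bounds only give $C^1$ convergence of the solutions, which does not control the $C^2$ structure of the nodal sets: in the limit two transversal boundary crossings can merge into a tangential one, and distinct branches of $N_\theta^{(n)}$ can coalesce or self-intersect. In other words, "checking that $N_\theta^{(n)}$ converges to a single $C^2$ arc without self-intersection" is exactly the statement to be proved, so the fallback is circular as written.
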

\begin{proof}
For any $\theta\in (0,\pi)\cup (\pi,2\pi)$, with a similar argument of Proposition \ref{prop1}, we obtain the result.

 For $\theta=0$ or $\pi$,  we  can prove that  $N_\theta\cap \partial \Om$ consists of exactly two points, and the {Properties 1,3,4} in the proof of Proposition \ref{prop1} still remain true. Finally, using a similar proof of \cite[Proposition 1]{DGM}, we obtain {Property 2}, which completes the proof. 
\end{proof}
To prove Theorem \ref{th3}, we need the following auxiliary lemma.
\begin{lemma}\label{auxi}
Under the assumptions of Theorem \ref{th2'}, suppose that $u_{\varepsilon,x_1x_2}(\mathbf{0})=0$, then $\mathfrak{R}_{x_2}(\mathbf{0})<0$.
\end{lemma}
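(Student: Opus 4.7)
The strategy is to reduce the geometric inequality $\mathfrak{R}_{x_2}(\mathbf{0})<0$ to a sign statement for the third derivative $u_{\varepsilon,x_1x_1x_2}(\mathbf{0})$, and then combine local boundary/PDE identities with a stability-based contradiction argument to pin down the sign.

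I would first carry out the algebraic reduction. Set $a:=u_{\varepsilon,x_2}(\mathbf{0})$, strictly negative by the Hopf boundary lemma since the outward unit normal at $\mathbf{0}$ is $e_2$. The tangency of $\partial\Omega$ to the $x_1$-axis at $\mathbf{0}$ forces $u_{\varepsilon,x_1}(\mathbf{0})=0$; the condition $\mathfrak{R}(\mathbf{0})=0$ combined with the formula for $\mathfrak{R}$ forces $u_{\varepsilon,x_1x_1}(\mathbf{0})=0$; and the hypothesis of the lemma gives $u_{\varepsilon,x_1x_2}(\mathbf{0})=0$. Substituting these into the $x_2$-derivative of the formula for $\mathfrak{R}(x)$, the numerator itself vanishes at $\mathbf{0}$ so the denominator's derivative contributes nothing, and only the term $u_{\varepsilon,x_1x_1x_2}(\mathbf{0})\,a^2$ survives in the numerator, yielding
\[
\mathfrak{R}_{x_2}(\mathbf{0})=-\frac{u_{\varepsilon,x_1x_1x_2}(\mathbf{0})}{|a|},
\]
so the lemma is equivalent to $u_{\varepsilon,x_1x_1x_2}(\mathbf{0})>0$.

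Next I would gather the available local identities. Parametrize $\partial\Omega$ near $\mathbf{0}$ as $(x_1,h(x_1))$ with $h(0)=h'(0)=h''(0)=0$; since $\mathfrak{R}\geq 0$ on $\partial\Omega$ has an isolated zero at $\mathbf{0}$ and equals $-h''/(1+(h')^2)^{3/2}$ there, one obtains $h'''(0)=0$ and $-h^{(4)}(0)>0$. Differentiating $u_\varepsilon(x_1,h(x_1))\equiv 0$ at $x_1=0$ gives $u_{\varepsilon,x_1x_1x_1}(\mathbf{0})=0$ and $u_{\varepsilon,x_1x_1x_1x_1}(\mathbf{0})=-a\,h^{(4)}(0)<0$. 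Differentiating problem \ref{2} once in $x_1$ and evaluating at $\mathbf{0}$, the vanishings $u_{\varepsilon,x_1}=u_{\varepsilon,x_1x_1}=u_{\varepsilon,x_1x_2}=0$ collapse the equation to
\[
(\varepsilon^2+a^2)\,u_{\varepsilon,x_1x_1x_1}(\mathbf{0})+[\varepsilon^2+(p-1)a^2]\,u_{\varepsilon,x_1x_2x_2}(\mathbf{0})=0,
\]
forcing $u_{\varepsilon,x_1x_2x_2}(\mathbf{0})=0$. Hence the Taylor expansion of $u_{\varepsilon,x_1}$ near $\mathbf{0}$ has the compact form
\[
u_{\varepsilon,x_1}(x_1,x_2)=u_{\varepsilon,x_1x_1x_2}(\mathbf{0})\,x_1x_2+\tfrac{1}{6}u_{\varepsilon,x_1x_1x_1x_1}(\mathbf{0})\,x_1^3+O(|x|^4).
\]

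Finally I would fix the sign by contradiction. The boundary identity $u_{\varepsilon,x_1}|_{\partial\Omega}=-u_{\varepsilon,x_2}\,h'$, combined with $u_{\varepsilon,x_2}<0$ and the sign of $h'$ near $0$ (which is $\leq 0$ for $x_1>0$ and $\geq 0$ for $x_1<0$ because $h''\leq 0$ with strict inequality away from $0$), prescribes that $u_{\varepsilon,x_1}<0$ on $\partial\Omega\cap\{x_1>0\}$ and $u_{\varepsilon,x_1}>0$ on $\partial\Omega\cap\{x_1<0\}$ in a neighborhood of $\mathbf{0}$. If $u_{\varepsilon,x_1x_1x_2}(\mathbf{0})\leq 0$, the Taylor expansion above produces an extra interior nodal curve of $u_{\varepsilon,x_1}$ inside $\Omega$, tangent to $\partial\Omega$ at $\mathbf{0}$, approximately $x_2=-u_{\varepsilon,x_1x_1x_1x_1}(\mathbf{0})\,x_1^2/(6u_{\varepsilon,x_1x_1x_2}(\mathbf{0}))$. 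Together with a small arc of $\partial\Omega$ containing $\mathbf{0}$, this interior branch would enclose a subdomain $W\subset\Omega$ on whose boundary $u_{\varepsilon,x_1}\equiv 0$; since $u_{\varepsilon,x_1}$ satisfies the linear uniformly elliptic equation obtained by differentiating \ref{2} in $x_1$, the eigenvalue-monotonicity argument used in the proof of Property 4 of Proposition \ref{prop1} would contradict the stability of $u_\varepsilon$, so $u_{\varepsilon,x_1x_1x_2}(\mathbf{0})>0$. The main obstacle is this last step: justifying rigorously that the predicted interior nodal branch does close up with $\partial\Omega$ to form a genuine Dirichlet region, which requires matching the parabolic interior curve against the quartic boundary shape $h$ using control of higher-order Taylor terms.
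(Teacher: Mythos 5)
Your reduction of the lemma to $u_{\varepsilon,x_1x_1x_2}(\mathbf{0})>0$, and the preparatory identities $u_{\varepsilon,x_1x_1x_1}(\mathbf{0})=0$ (via $\varphi'''(0)=0$) and $u_{\varepsilon,x_1x_2x_2}(\mathbf{0})=0$ (via the differentiated equation \ref{f1}), coincide with the paper. The final sign argument, however, has genuine gaps. First, the strict inequality $h^{(4)}(0)<0$ does not follow from the hypotheses: an isolated zero of the curvature may be degenerate (curvature vanishing like $x_1^4$, say), so $u_{\varepsilon,x_1x_1x_1x_1}(\mathbf{0})=-u_{\varepsilon,x_2}(\mathbf{0})h^{(4)}(0)$ may well vanish; the paper never uses this quantity. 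Second, your Taylor expansion of $u_{\varepsilon,x_1}$ omits the cubic terms carried by $u_{\varepsilon,x_1x_1x_1x_2}(\mathbf{0})x_1^2x_2$, $u_{\varepsilon,x_1x_1x_2x_2}(\mathbf{0})x_1x_2^2$, $u_{\varepsilon,x_1x_2x_2x_2}(\mathbf{0})x_2^3$, which cannot be discarded a priori, and in the sub-case $u_{\varepsilon,x_1x_1x_2}(\mathbf{0})=0$ your formula for the extra branch divides by zero, so that case is simply not treated. Third, and this is the step you flag yourself, the stability contradiction does not go through as stated: the putative parabolic nodal branch touches $\partial \Om$ only at $\mathbf{0}$ (the boundary is quartically flat there, the branch quadratic), so locally it does not close up with $\partial \Om$; and even if a region $W$ were bounded by this branch together with an arc of $\partial\Om$, one would have $u_{\varepsilon,x_1}\neq 0$ on the $\partial\Om$-portion of $\partial W$ away from $\mathbf{0}$, so $u_{\varepsilon,x_1}$ is not a zero-boundary-datum function on $W$ and the Property 4 / first-eigenvalue argument does not apply.

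The paper's proof avoids all of this. Its Step 1 excludes $u_{\varepsilon,x_1x_1x_2}(\mathbf{0})=0$ by Bers' local-behavior theorem: $u_{\varepsilon,x_1}$ would then behave like a homogeneous polynomial of degree at least $3$ (harmonic after a linear change of coordinates), forcing at least two branches of $\tilde{N}=\{u_{\varepsilon,x_1}=0\}$ to enter $\Om$ at $\mathbf{0}$, against Proposition \ref{prop2}. The sign is then obtained softly, with no stability and no fourth-order boundary information: since $u_{\varepsilon,x_1x_1x_2}(\mathbf{0})\neq 0$, near $\mathbf{0}$ one has $u_{\varepsilon,x_1}=u_{\varepsilon,x_1x_1x_2}(\mathbf{0})x_1x_2+o(x_1^2+x_2^2)$ and, by Proposition \ref{prop2}, exactly one branch of $\tilde{N}$ enters $\Om$ from $\mathbf{0}$, written as $x_1=g(t)$, $x_2=t$, with $g'(0)=0$; comparing with the sign of $u_{\varepsilon,x_1}$ along horizontal segments joining $\partial\Om$ (on the side $x_1<0$, where $u_{\varepsilon,x_1}>0$) to this branch gives $u_{\varepsilon,x_1x_1}(g(t),t)\le 0$ with equality at $t=0$, and differentiating this one-variable inequality at $t=0$ yields $u_{\varepsilon,x_1x_1x_2}(\mathbf{0})\ge 0$, hence $>0$. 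You should replace your enclosure/stability step by an argument of this monotonicity type.
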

\begin{proof}
From $\mathfrak{R}(\mathbf{0})=0$ and $u_{\varepsilon,x_1}(\mathbf{0})=0$, we obtain $u_{\varepsilon,x_1x_1}(\mathbf{0})=0$.
By a direct computation, we obtain
\begin{equation*}
  \mathfrak{R}_{x_2}(\mathbf{0})=-\frac{u_{\varepsilon,x_1x_1x_2}(\mathbf{0})u_{\varepsilon,x_2}^2(\mathbf{0})}{|\nabla u_\varepsilon|^3}.
\end{equation*}
We claim that $u_{\varepsilon,x_1x_1x_2}(\mathbf{0})>0$, then we complete the proof, since $u_{\varepsilon,x_2}(\mathbf{0})=\frac{\partial u_\varepsilon}{\partial \nu}(\mathbf{0})<0$.
To prove this claim, we divide the proof into four steps.

\vspace{.1cm}

$\bullet$ {\em Step 1: $u_{\varepsilon,x_1x_1x_2}(\mathbf{0})\neq0$.}

\vspace{.1cm}

Since $u_{\varepsilon,x_2}(\mathbf{0})\neq0$, by the implicit function theorem, we get that, around the point $\mathbf{0}$, $u_\varepsilon(x_1,x_2)=0$ if and only if $x_2=\varphi(x_1)$ for some function $\varphi\in C^3(\bar{\Om})$. By the assumptions
on the boundary of $\Om$, we have
\begin{equation*}
  \varphi ({0})=\varphi'({0})=\varphi''({0})=0.
\end{equation*}
In addition, by $\mathfrak{R}'(x_1,\varphi(x_1))|_{x_1=0}=0$ and
\begin{equation*}
\mathfrak{R}(x_1,\varphi(x_1))=\frac{\varphi''(x_1)}{\big[1+(\varphi'(x_1))^2\big]^{\frac{3}{2}}},
\end{equation*}
we obtain
${\varphi'''({0})}=0$.
Differentiating $u_\varepsilon(x_1,\varphi(x_1)) = 0$, we deduce that $u_{\varepsilon,x_1x_1x_1}(\mathbf{0})=0$. Moreover, it follows from \ref{f1} that 
\begin{align*}
  &-\big(\e^2 +|\nabla u_\varepsilon|^2\big)^\frac{p-2}2(u_{\varepsilon,x_1x_1x_1}+u_{\varepsilon,x_1x_2x_2}) \nonumber\\
  &-(p-2) \big(\e^2 +|\nabla u_\varepsilon|^2\big)^\frac{p-4}2
  (u_{\varepsilon,x_1}u_{\varepsilon,x_1x_1}+u_{\varepsilon,x_2}u_{\varepsilon,x_1x_2})
  \Delta u_\varepsilon \nonumber\\
  &-(p-2)(p-4) \big(\e^2 +|\nabla u_\varepsilon|^2\big)^\frac{p-6}2
  (u_{\varepsilon,x_1}u_{\varepsilon,x_1x_1}+u_{\varepsilon,x_2}u_{\varepsilon,x_1x_2}) \nonumber\\
  &\ \ \ \times  \sum_{i,j=1}^2
  u_{\varepsilon,x_i}u_{\varepsilon,x_j}u_{\varepsilon,x_ix_j}
  \\
  &-2(p-2) \big(\e^2 +|\nabla u_\varepsilon|^2\big)^\frac{p-4}2\sum_{i,j=1}^2
  u_{\varepsilon,x_1x_i}u_{\varepsilon,x_j}u_{\varepsilon,x_ix_j}
\nonumber \\
  &-(p-2) \big(\e^2 +|\nabla u_\varepsilon|^2\big)^\frac{p-4}2\sum_{i,j=1}^2
  u_{\varepsilon,x_i}u_{\varepsilon,x_j}u_{\varepsilon,x_1x_ix_j}=f'(u_\varepsilon)u_{\varepsilon,x_1}\nonumber,
\end{align*}
which implies $u_{\varepsilon,x_1x_2x_2}(\mathbf{0})=0$. If $u_{\varepsilon,x_1x_1x_2}(\mathbf{0})=0$, then the Taylor expansion of $u_{\varepsilon,x_1}$ in a neighborhood of $\mathbf{0}$ becomes
\begin{equation*}
  u_{\varepsilon,x_1}(x)=\text{homogeneous harmonic polynomial of order three} +O(|x|^4).
\end{equation*}
So around the point $\mathbf{0}$,  the nodal curve  $\tilde{N}=\{x\in \bar{\Om}:u_{\varepsilon,x_1}(x)=0\}$  consists of at least three curves 
intersecting $\partial \Om$ at $\mathbf{0}$, and at least two must be contained in $\Om$, a contradiction with Proposition \ref{prop2}.


\vspace{.1cm}

$\bullet$ {\em Step 2: parametrization of $\tilde{N}$ near $\mathbf{0}$.}

\vspace{.1cm}

{ Let $F(x_1,x_2)=u_{\varepsilon,x_1}(x_1,x_2)$ with $(x_1,x_2)\in \bar{\Om}$, then
\begin{equation*}
  F(\mathbf{0})=F_{x_1}(\mathbf{0})=F_{x_2}(\mathbf{0})=F_{x_1x_1}(\mathbf{0})=F_{x_2x_2}(\mathbf{0})=0,\quad { F_{x_1x_2}(\mathbf{0})\neq0}.
\end{equation*}
Using the Taylor expansion, in a neighborhood of $\mathbf{0}$,
we have
\begin{equation*}
  F(x_1,x_2)=F_{x_1x_2}(\mathbf{0})x_1x_2+o(x_1^2+x_2^2).
\end{equation*}
Therefore, $F(x_1,x_2)=0$ if and only if  $(x_1,x_2)$ closes to the $x_1$-axis or $x_2$-axis.
By Proposition \ref{prop2}, $\tilde{N}$ consists of
exactly one branch entering in $\Om$ from $\mathbf{0}$. Thus, in a neighborhood of $\mathbf{0}$, $\tilde{N}$ closes to the $x_2$-axis.
That is, there exists small $\delta>0$ such that
$\tilde{N}$
can be parameterized as
\begin{equation*}
 \mathcal{C}=
 \begin{cases}
x_1=g(t),\\
x_2=t,
\end{cases} t\in [-\delta,0].
\end{equation*}
 {Moreover, $g'(0)=0$}}.
\vspace{.1cm}

$\bullet$ {\em Step 3: $u_{\varepsilon,x_1x_1}(g(t),t)\leq 0$ for $t\in [-\delta,0]$.}

\vspace{.1cm}

Let $\bar{x}=(\bar{x}_1,\bar{x}_2)\in \partial \Om$ close to $\mathbf{0}$ with $\bar{x}_1 <0$,  and $(g(\bar{x}_2),\bar{x}_2)\in \mathcal{C}$. Then 
 for any $\bar{x}_1\leq x_1<g(\bar{x}_2)$, we have $u_{\varepsilon,x_1}({x}_1,\bar{x}_2)>0$ 
 and $u_{\varepsilon,x_1}(g(\bar{x}_2),\bar{x}_2)=0$, thus $u_{\varepsilon,x_1x_1}(g(\bar{x}_2),\bar{x}_2)\leq 0$.

\vspace{.1cm}

$\bullet$ {\em Step 4: end of the proof.}

\vspace{.1cm}

Set $H(t)=u_{\varepsilon,x_1x_1}(g(t),t)$ for $t\in [-\delta,0]$, by {Step 3} and the assumptions of $\Om$, we have $H(0)=0$ and $H(t)\leq 0$. Hence, $H'(0)\geq 0$. Since $H'(t)=u_{\varepsilon,x_1x_1x_1}(g(t),t)g'(t)+u_{\varepsilon,x_1x_1x_2}(g(t),t)$,  { $u_{\varepsilon,x_1x_1x_1}(\mathbf{0})=0$, and $g'(0)=0$}, we have $u_{\varepsilon,x_1x_1x_2}(\mathbf{0}) \geq0$. This with {Step 1}  gives the claim.
\end{proof}

\noindent{\bf \emph{Proof of Theorem \ref{th3}}}\ \ \
As mentioned above, we have $u_{\varepsilon,x_1}(\mathbf{0})=0$, $u_{\varepsilon,x_2}(\mathbf{0})<0$ and $u_{\varepsilon,x_1x_1}(\mathbf{0})=0$. We distinguish the two cases, according to whether $u_{\varepsilon,x_1x_2}(\mathbf{0})$ vanishes or not.

\vspace{.1cm}

$\bullet$ {\em Case 1: $u_{\varepsilon,x_1x_2}(\mathbf{0})\neq0$.}

\vspace{.1cm}

Similar to Lemma \ref{lem1}, we have $\mathbf{0}\not\in T(\partial \Om)$ and $\deg (T,\Om,\mathbf{0})=1$. By Proposition \ref{prop2}, we know that $M_\theta \cap \Om=\emptyset$ for any $\theta\in [0,2\pi)$.
Applying Corollary \ref{coro}, we get the result.

\vspace{.1cm}   

$\bullet$ {\em Case 2: $u_{\varepsilon,x_1x_2}(\mathbf{0})=0$.}

\vspace{.1cm} 

In this case, a direct computation yields $T(\mathbf{0})=\mathbf{0}$ but $\mathbf{0}\in \partial \Om$.  Thus the degree of $T$ is not well defined.
For any $\rho>0$ small enough, we define $\Om_\rho=\Om\backslash \bar{U}_\rho$, where $U_\rho$ is a ball in $\mathbb{R}^2$
centered at $\mathbf{0}$ with radius $\rho$, it is chosen in such a way that
$\nabla u_\varepsilon\neq \mathbf{0}$ in $\bar{\Om}\cap \bar{U}_\rho$, and
$\Om_\rho$ is strictly star-shaped with respect to some point $x_0=(x_{01},x_{02})\in \Om_\rho$.
Now, we consider the map $T: \bar \Om_\rho \rightarrow {\R}^2$, then the degree of $T$ is well defined, and if the homotopy
\begin{align*}
  H_\rho:[0,1]\times  \bar \Om_\rho &\rightarrow {\R}^2\\
  (t,x)&\mapsto tT(x)+(1-t)(x-x_0),
\end{align*}
is admissible, we have $\deg (T,\Om_\rho,\mathbf{0})=1$. Assume by contradiction, similar to the proof of Lemma \ref{lem1}, there exist $t_\rho\in [0,1]$ and $x_\rho=(x_{\rho1},x_{\rho2})\in \partial \Om_\rho$ such that
\begin{equation*}
  -t_\rho\mathfrak{R}(x_\rho)|\nabla u_\varepsilon(x_\rho)|^3=(t_\rho-1)[(x_{\rho1}-x_{01})u_{\varepsilon, x_1}(x_\rho)+
  (x_{\rho2}-x_{02})u_{\varepsilon, x_2}(x_\rho)].
\end{equation*}
Writing $\nu=(\nu_{x_1},\nu_{x_2})$ for the unit exterior normal vector to $\partial \Om_\rho$ at $x_\rho$, by continuity, we have $|\nabla u_\varepsilon(x_\rho)|>0$ and
\begin{align*}
  &(x_{\rho1}-x_{01})u_{\varepsilon, x_1}(x_\rho)+
  (x_{\rho2}-x_{02})u_{\varepsilon, x_2}(x_\rho)\\
  =&\frac{\partial u_\varepsilon}{\partial \nu}({x_\rho})[({x}_{\rho1}-x_{01})\nu_{x_1}+
  ({x}_{\rho2}-x_{02})\nu_{x_2}]<0,\quad \text{as $\rho\rightarrow0$}.
\end{align*}
  Thus $\mathfrak{R}(x_\rho)\leq 0$ and $x_\rho \in \Om \cap \partial U_\rho$. Then the vertical line $x_1=x_{\rho 1}$ hits $\partial \Om$ at a unique point $y=(y_1,y_2)$ with $y_1=x_{\rho1}$ and $y_2>x_{\rho2}$. Since $\mathfrak{R}(y)\geq 0$ and $\mathfrak{R}(x_\rho)\leq 0$, there exists $z_\rho\in \bar{\Om} \cap \bar{U}_\rho$ such that $\mathfrak{R}_{x_2}(z_\rho)\geq 0$, and as $\rho\rightarrow0$, we have $\mathfrak{R}_{x_2}(\mathbf{0})\geq 0$, which contradicts Lemma \ref{auxi}. Moreover, it follows from Proposition \ref{prop2} that $M_\theta\cap \Om_\rho=\emptyset$ for any $\theta\in [0,2\pi)$.
  Thus, apply Corollary \ref{coro}, the claim follows.
\qed

\vspace{.5cm}

We now treat domains where the curvature vanishes at some segments of its boundary. Similarly, we know that the curvature vanishes
only at finitely many segments. Without loss of generality,
we assume $\Om\subset \{(x_1,x_2)\in {\R}^2:x_2<0\}$ is a smooth bounded domain such that $\partial \Om$ is tangent to the $x_1$-axis,
and
the curvature is zero at an interval
$\Gamma=\{(x_1,x_2)\in {\R}^2:x_1\in (-1,1),x_2=0\}$.
Then
\begin{thm}\label{th4}
Under the assumptions of Theorem \ref{th2'}, $u_\varepsilon$ has a unique critical point, which is a non-degenerate maximum point.
\end{thm}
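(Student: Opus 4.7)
The plan is to parallel the proof of Theorem \ref{th3}, replacing the single zero-curvature point $\mathbf{0}$ by the segment $\Gamma$ and tracking the modifications. As a first step I would establish the analogue of Proposition \ref{prop2}: the nodal sets $N_\theta$ are $C^2$ curves without self-intersection whose interior part meets $\partial\Omega$ transversally at exactly two points, and every critical point of $u_\varepsilon$ is non-degenerate. For $\theta\in(0,\pi)\cup(\pi,2\pi)$ the proof goes through verbatim; for $\theta = 0,\pi$, the tangential direction along $\Gamma$, one must quotient out the trivial inclusion $\Gamma \subset N_\theta$ (since $u_\varepsilon \equiv 0$ on $\Gamma$ forces $u_{\varepsilon,x_1}\equiv 0$ there) and argue that the remaining internal portion $N_\theta \setminus \Gamma$ retains the $C^2$-curve structure, using the positive curvature on $\partial\Omega \setminus \bar\Gamma$ together with a local harmonic-polynomial expansion near the endpoints of $\Gamma$.

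Next I would prove the analogue of Lemma \ref{auxi}: at every $p_0 \in \Gamma$ for which $u_{\varepsilon,x_1x_2}(p_0) = 0$, one has $\mathfrak{R}_{x_2}(p_0) < 0$. The proof is in fact simpler than the isolated-point version because the local parametrization of $\partial\Omega$ at $p_0$ is literally $\varphi \equiv 0$, so the preparatory computations $\varphi'(0) = \varphi''(0) = \varphi'''(0) = 0$ are automatic, and on $\Gamma$ one directly has $u_{\varepsilon,x_1} = u_{\varepsilon,x_1x_1} = u_{\varepsilon,x_1x_1x_1} = 0$. The formula $\mathfrak{R}_{x_2}(p_0) = -u_{\varepsilon,x_1x_1x_2}(p_0)\, u_{\varepsilon,x_2}^2(p_0)/|\nabla u_\varepsilon(p_0)|^3$ then holds as before, and the four-step argument of Lemma \ref{auxi} (non-vanishing of $u_{\varepsilon,x_1x_1x_2}(p_0)$ via the nodal structure; parametrization of the unique branch of $\{u_{\varepsilon,x_1}=0\}$ entering $\Omega$ at $p_0$ as $x_1 = g(t), \, x_2 = t$ with $g'(0) = 0$; the sign $u_{\varepsilon,x_1x_1}(g(t),t)\le 0$; and the derivative at $t=0$) yields $u_{\varepsilon,x_1x_1x_2}(p_0) > 0$.

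With these two ingredients in hand, the degree argument splits into two cases exactly as in Theorem \ref{th3}. If $u_{\varepsilon,x_1x_2}$ does not vanish on $\Gamma$, then along $\Gamma$ one has $T = (-u_{\varepsilon,x_1x_2}u_{\varepsilon,x_2},\, 0) \neq \mathbf{0}$, and the homotopy $H(t,x) = tT(x) + (1-t)(x - x_0)$ with $x_0 \in \Omega$ is admissible: on $\partial\Omega \setminus \Gamma$ this follows from strict positive curvature as in Lemma \ref{lem1}, and on $\Gamma$ the outward normal is $(0,1)$, so the computation of Lemma \ref{lem1} collapses to $(t-1)\, u_{\varepsilon,x_2}(\bar x)(\bar x_2 - x_{02}) = 0$, which forces $t=1$, at which point $H = T \neq \mathbf{0}$. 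Hence $\deg(T,\Omega,\mathbf{0}) = 1$ and Corollary \ref{coro} applies. If $u_{\varepsilon,x_1x_2}$ vanishes at some point $p_0 \in \Gamma$, I would excise a small ball $U_\rho$ around each such point, form $\Omega_\rho$, and run the Case 2 argument of Theorem \ref{th3}: a failure of the homotopy on $\partial\Omega_\rho$ would force a sequence $z_\rho \in \bar\Omega \cap \bar U_\rho$ with $\mathfrak{R}_{x_2}(z_\rho) \ge 0$, contradicting the Lemma \ref{auxi} analogue as $\rho\to 0$.

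The main obstacle will be the $\theta = 0, \pi$ portion of the Proposition \ref{prop2} analogue, together with ensuring that $u_{\varepsilon,x_1x_2}$ vanishes only at isolated points of $\Gamma$. For the former, the inclusion $\Gamma \subset N_0$ means the standard implicit-function argument (\emph{Property 1} of Proposition \ref{prop1}) breaks down along $\Gamma$ itself, so one must work with the reduced nodal set $(N_0 \cap \Omega)\cup(N_0 \cap \partial\Omega \setminus \Gamma)$ and patch it continuously through the endpoints of $\Gamma$, where the curvature transitions from zero to positive. For the latter, a boundary strong unique continuation principle applied to $u_{\varepsilon,x_1}$ (which vanishes on the arc $\Gamma$ together with its tangential derivatives) should preclude non-isolated zeros of $u_{\varepsilon,x_1x_2}$ on $\Gamma$; verifying this cleanly against the degenerate operator in \ref{f1} may require some care. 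Once these two points are settled, the rest is a fairly direct repackaging of the Theorem \ref{th3} proof.
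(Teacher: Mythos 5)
Your overall architecture (a nodal-set proposition for $N_\theta$, an auxiliary curvature lemma at degenerate boundary points, then an excision/degree argument) parallels the paper's, but there is a genuine gap at exactly the point you flag as ``the main obstacle'', and your proposed way around it does not work. The paper's Proposition \ref{prop3} establishes a structural fact your plan needs but never proves: for $\theta=0,\pi$ the part of $N_\theta$ other than $\Gamma$ enters $\Omega$ from exactly \emph{one} point $\xi\in\Gamma$ (this is obtained softly, via the stability-based Property 4 excluding enclosed subdomains), and consequently, by the Bers-type expansion of $u_{\varepsilon,x_1}$, one gets $u_{\varepsilon,x_1x_2}(q)\neq0$ for every $q\in\Gamma\setminus\{\xi\}$. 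Your substitute --- a boundary strong unique continuation principle giving only that the zeros of $u_{\varepsilon,x_1x_2}$ on $\Gamma$ are isolated --- is both unverified for this operator and insufficient: if there were two or more zeros, your claimed analogue of Lemma \ref{auxi} (``$u_{\varepsilon,x_1x_1x_2}(p_0)>0$ at every zero $p_0$'') cannot hold, because a $C^1$ function on $\Gamma$ all of whose zeros have strictly positive tangential derivative has at most one zero; hence the plan ``excise a ball around each zero and contradict the lemma at each'' is not executable. Relatedly, your sketch of that lemma at an arbitrary zero $p_0$ re-runs Steps 2--4 of Lemma \ref{auxi}, but Step 2 needs that exactly one branch of $\{u_{\varepsilon,x_1}=0\}$ enters $\Omega$ at $p_0$, and Step 3 needs $u_{\varepsilon,x_1}>0$ on one side of that branch; near an interior point of the flat segment neither follows from Hopf at neighbouring boundary points (where $u_{\varepsilon,x_1}\equiv0$ on $\Gamma$), but only from the global nodal picture of Proposition \ref{prop3}. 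The paper's Lemma \ref{auxi'} in fact argues differently: it deduces $u_{\varepsilon,x_1x_2}<0$ on $\Gamma^-$ and $u_{\varepsilon,x_1x_2}>0$ on $\Gamma^+$ by Taylor expansion near the endpoints of $\Gamma$ together with the non-vanishing on $\Gamma\setminus\{\xi\}$, concludes $u_{\varepsilon,x_1x_1x_2}(\mathbf{0})\geq0$, and excludes equality as in Step 1 of Lemma \ref{auxi}.

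A second, smaller omission: in your Case 2 the homotopy can also fail at a point $x_\rho\in\Gamma$ \emph{outside} the excised balls (where $\mathfrak{R}=0$), not only at points of $\Omega\cap\partial U_\rho$; the paper's proof of Theorem \ref{th4} isolates precisely this case and disposes of it by noting that there $t_\rho=1$, so $T(x_\rho)=\mathbf{0}$, which is impossible since $T(x_\rho)=(-u_{\varepsilon,x_1x_2}(x_\rho)u_{\varepsilon,x_2}(x_\rho),0)$ with $u_{\varepsilon,x_1x_2}(x_\rho)\neq0$ --- again the non-vanishing on $\Gamma\setminus\{\xi\}$ from Proposition \ref{prop3}. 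You perform the corresponding computation in your Case 1, so this is easily repaired, but only after the missing structural fact above is in place; with it, your Cases 1 and 2 collapse to the paper's single dichotomy at the distinguished point $\xi$.
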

\begin{prop}\label{prop3}
Under the assumptions of Theorem \ref{th2'}, 
for
any $\theta\in [0,2\pi)$, there exists a unique point $\xi\in \Gamma$ such that the nodal set $N_\theta \subset \bar{\Om}\backslash\Gamma_\xi$ (where $\Gamma_\xi=\Gamma\backslash \{\xi\}$) is 
a $C^2$ curve without self-intersection, which hits $\partial \Om$ at two points.
Moreover, in any critical point of $u_\varepsilon$, the Hessian has rank $2$.
\end{prop}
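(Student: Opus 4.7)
The plan is to extend the proof of Proposition \ref{prop2} to the new geometry, splitting by whether the direction $\mathbf{e_\theta}$ is tangent to the flat segment $\Gamma$ or not. The genuinely new difficulty is the case $\theta\in\{0,\pi\}$, in which the entire segment $\Gamma$ is contained in the nodal set.

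For $\theta\in(0,\pi)\cup(\pi,2\pi)$ the analysis reduces to the situation already treated. On $\Gamma$ the Hopf boundary lemma yields $\nabla u_\varepsilon$ parallel to the inward normal with $u_{\varepsilon,x_2}<0$, hence
\begin{equation*}
u_{\varepsilon,\theta}=\sin\theta\cdot u_{\varepsilon,x_2}\neq 0\quad\text{on }\Gamma,
\end{equation*}
so $N_\theta\cap\Gamma=\emptyset$. Since $\partial\Om\setminus\Gamma$ has strictly positive curvature, the argument of \cite{CC} used in Proposition \ref{prop1} shows that $N_\theta$ is a $C^2$ curve meeting $\partial\Om$ transversally at two points of $\partial\Om\setminus\Gamma$; the containment $N_\theta\subset\bar{\Om}\setminus\Gamma_\xi$ is then automatic for any $\xi\in\Gamma$.

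The substantive case is $\theta=0$ (the case $\theta=\pi$ being identical). Now $u_{\varepsilon,x_1}$ vanishes identically on $\Gamma$, so I focus on $N_0\cap(\bar{\Om}\setminus\Gamma_\xi)$. The four ``interior'' properties from the proof of Proposition \ref{prop1} remain valid with the same arguments, using only the interior uniform ellipticity \ref{eq12}--\ref{eq3} and the stability of $u_\varepsilon$: Property 1 (local $C^2$-structure away from $M_0$) from the implicit function theorem; Property 3 (transverse branching at $M_0$-points in $\Om$) from the Bers-type expansion \cite{B} combined with strong unique continuation; and Property 4 (no sub-domain of $\Om$ can be enclosed by $N_0$) from the domain-monotonicity of the first eigenvalue of $L_\varepsilon(u_\varepsilon)$. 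On $\partial\Om\setminus\Gamma$, strict convexity singles out a unique point $q$ at which the outward normal is vertical, and the implicit-function/Hopf argument of \cite{CC} shows that $N_0$ is a $C^2$ curve meeting $\partial\Om$ transversally at $q$.

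The main new task is to show that exactly one interior branch of $N_0$ emanates from $q$ and terminates at a single point $\xi\in\Gamma$. Any connected component of $N_0\cap\bar{\Om}$ not contained in $\Gamma$ must have endpoints on $\partial\Om$ (closed loops are excluded by Property 4 and interior accumulation by the Bers expansion); since $q$ is the only admissible endpoint outside $\Gamma$, at most one such component passes through $q$. If a second interior component $\gamma$ existed, its two endpoints would lie at distinct points $\xi_1,\xi_2\in\Gamma$, and $\gamma$ together with the sub-arc of $\Gamma$ from $\xi_1$ to $\xi_2$ would bound an open set $W\subset\Om$. On $\partial W$ the function $u_{\varepsilon,x_1}$ vanishes (on $\gamma$ by construction, on $\Gamma$ trivially), and by strong unique continuation $u_{\varepsilon,x_1}\not\equiv 0$ in $W$; hence $u_{\varepsilon,x_1}\in W_0^{1,p}(W)\setminus\{0\}$ solves $L_\varepsilon(u_\varepsilon)u_{\varepsilon,x_1}=0$, contradicting the stability of $u_\varepsilon$ via domain monotonicity of the first eigenvalue. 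Therefore the interior branch is unique and its endpoint on $\Gamma$ is the required $\xi$. Applying the same enclosing/stability argument to this unique branch rules out $M_0\cap\Om\neq\emptyset$ and forces the Hessian of $u_\varepsilon$ at any interior critical point to have rank $2$, exactly as in Proposition \ref{prop1}. The most delicate step I anticipate is ensuring via a refined Bers-type expansion near $\Gamma$ (in the spirit of Step 1 of Lemma \ref{auxi}) that the unique interior branch meets $\Gamma$ at a single point rather than along a sub-arc, which excludes the infinite-order-vanishing pathology.
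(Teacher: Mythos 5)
Your overall skeleton matches the paper's: oblique directions $\theta\in(0,\pi)\cup(\pi,2\pi)$ reduce to Proposition \ref{prop1} because $u_{\varepsilon,\theta}=\sin\theta\,u_{\varepsilon,x_2}\neq0$ on $\Gamma$; for $\theta=0,\pi$ one keeps Properties 1, 3, 4, identifies the unique boundary point on the strictly curved part, and uses enclosed-domain/stability contradictions to pin down a single interior branch ending at a single $\xi\in\Gamma$. This is exactly the route the paper takes.

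However, there is a genuine gap, and it sits precisely at the new difficulty of this proposition: the local structure of $N_0$ at points of the flat segment $\Gamma$. You explicitly defer it (``the most delicate step I anticipate'') without giving the argument, but without it your reasoning does not close. First, your claim that every component of $N_0\cap\bar\Om$ not contained in $\Gamma$ ``has endpoints on $\partial\Om$'' and in particular that the branch through $q$ terminates at a \emph{single} point of $\Gamma$ presupposes that near each point of $\Gamma$ the nodal set has a finite arc structure; a priori the interior branch could accumulate on a sub-arc of $\Gamma$, and the interior Bers expansion you invoke does not apply at boundary points where $u_{\varepsilon,x_1}$ vanishes along a whole segment. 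Second, the proposition (and, crucially, Lemma \ref{auxi'} and the proof of Theorem \ref{th4} later) needs the quantitative dichotomy the paper proves: on $\Gamma$ one has $u_{\varepsilon,x_1}=u_{\varepsilon,x_1x_1}=0$, so a point $q\in\Gamma$ carries an interior branch exactly when $u_{\varepsilon,x_1x_2}(q)=0$ (then the Taylor/Bers-type expansion of $u_{\varepsilon,x_1}$ at $q$ starts with a homogeneous harmonic polynomial of degree $\geq2$, forcing at least two nodal curves through $q$, one entering $\Om$), whereas $u_{\varepsilon,x_1x_2}(q)\neq0$ forces $N_0$ to coincide with $\Gamma$ near $q$. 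The paper first gets existence and uniqueness of $\xi$ by the enclosing/stability argument (if no branch entered $\Om$ from $\Gamma$, the branch from $p$ would enclose a sub-domain; two entry points likewise enclose one), and then deduces $u_{\varepsilon,x_1x_2}\neq0$ on $\Gamma\setminus\{\xi\}$ from the uniqueness of $\xi$ via the degree-two expansion. You need to supply this boundary expansion step (finite vanishing order of $u_{\varepsilon,x_1}$ at points of $\Gamma$, using $u_\varepsilon\in C^{3}(\bar\Om)$ and the uniformly elliptic linearized equation \ref{f1}) rather than anticipate it; everything else in your sketch — including the stability contradiction for a component whose boundary uses a sub-arc of $\Gamma$, which is legitimate since $u_{\varepsilon,x_1}$ vanishes there and lies in $W_0^{1,p}(W)$ — is consistent with the paper's argument.
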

\begin{proof}
For any $\theta\in (0,\pi)\cup (\pi,2\pi)$, a similar discussion of Proposition \ref{prop1} gives the result.

  For $\theta=0$ or $\pi$,  
it is obvious that there exists a unique point $p\in N_\theta\cap(\partial \Om\backslash \Gamma)$, and the {Properties 1,3,4} in the proof of Proposition \ref{prop1} still remain true.

We claim that, there exists a unique point $\xi\in \Gamma$ such that, around the point $\xi$, the nodal curve ${N}_\theta$ consists of exactly two curves: the first one is $\Gamma$, while the second 
 intersects $\partial \Om$ transversally at $\xi$. Otherwise, there exists $\eta>0$ such that ${N}_\theta\cap  \Om \cap (\mathbb{R}\times (-\eta,0])= \emptyset$,
then the nodal curve ${N}_\theta$ starting from $p$ has to enclose a non-empty domain $W\subset \Om$, which is a contradiction. Moreover, it cannot have more than one
this type of curves
exiting at $\xi$, otherwise, there exists a sub-domain $W$ as before, and this is a contradiction.  For the uniqueness of $\xi\in \Gamma$, if there exists another point $\varsigma\in \Gamma$ with the same property, we can argue as before to get the existence of $W$, which is impossible.

Up to a translation, we assume $\xi=\mathbf{0}$. 
We point out that, for any $q\in \Gamma$, there holds
\begin{equation*}
  u_{\varepsilon,x_1}(q)=0,\quad u_{\varepsilon,x_2}(q)<0,\quad \text{and \ $u_{\varepsilon,x_1x_1}(q)=0$.}
\end{equation*}
{\bf Moreover, if $q\neq \mathbf{0}$, we have $u_{\varepsilon,x_1x_2}(q)\neq0$,  thus $M_\theta\in \bar{\Om}\backslash \Gamma_0$}. 
Otherwise,
the Taylor expansion of $u_{\varepsilon,x_1}$ in a neighborhood of $q$ becomes
\begin{equation*}
  u_{\varepsilon, x_1}(x)=\text{homogeneous harmonic polynomial of order two} +O(|x|^3).
\end{equation*}
So around the point $q$,  the nodal curve  ${N}_\theta$ consists of at least two curves
intersecting $\partial \Om$ at $q$, and at least one must be entering in $\Om$, a contradiction with the uniqueness of the point $\xi=\mathbf{0}$ with this property.

The 
rest of the proof follows arguing as in \cite[Proposition 1]{DGM}.
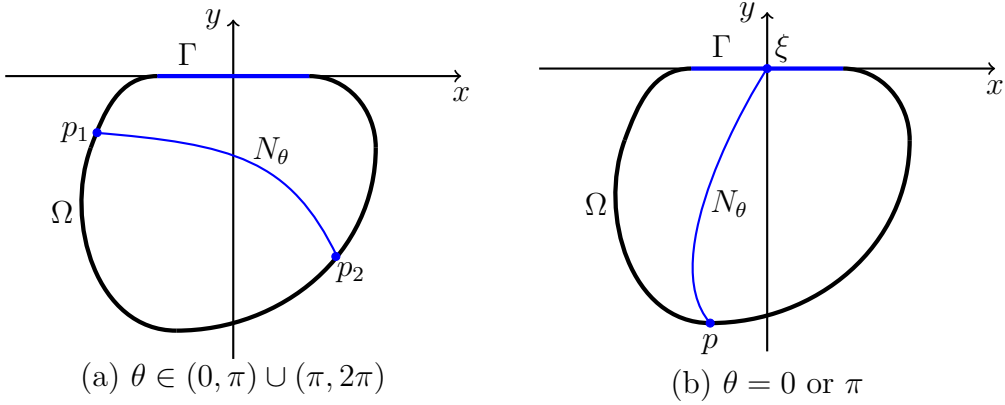
\begin{figure}[htbp]
\begin{minipage}{0.45\textwidth}
\begin{tikzpicture}
	\draw[thick,->] (-3,0) -- (3,0) node[below] {$x$};
	\draw[thick,->] (0,-3.75) -- (0,0.75) node[left] {$y$};
	
	\coordinate (A) at  (-1,0);
	\coordinate (B) at  (-1.875,-0.95);
	\coordinate (C) at  (-0.75,-3.375);
	\coordinate (D) at  (1.875,-0.95);
    \coordinate (E) at  (1,0);

	\draw [ultra thick] (A) to [out=180,in=70] (B);
	\draw [ultra thick] (B) to [out=250,in=180] (C);
	\draw [ultra thick] (C) to [out=0,in=270] (D);
	\draw [ultra thick] (D) to [out=90,in=0] (E);
    \draw [ultra thick, blue] (E) to [out=180,in=0] (A);
	
\node at (-2.25,-1.8) {$\Omega$};
\node at (0.5,-0.99) {$N_\theta$};
\node at (0,-4) {(a) $\theta\in (0,\pi)\cup (\pi,2\pi)$};
\node at (-0.6,0.3) {$\Gamma$};

\node at (-2.075,-0.75) {$p_1$};
\filldraw (-1.792,-0.75) [blue] circle (1.5pt);

\node at (1.55,-2.600) {$p_2$};
\filldraw (1.35,-2.392) [blue] circle (1.5pt);

\draw[thick, blue] (-1.830,-0.75) .. controls (0,-0.9) and (0.75,-1.1) .. (1.37,-2.400);


\end{tikzpicture}
\end{minipage}
\hspace{0.05\textwidth}
\begin{minipage}{0.45\textwidth}
\begin{tikzpicture}
	\draw[thick,->] (-3,0) -- (3,0) node[below] {$x$};
	\draw[thick,->] (0,-3.75) -- (0,0.75) node[left] {$y$};
	
	\coordinate (A) at  (-1,0);
	\coordinate (B) at  (-1.875,-0.95);
	\coordinate (C) at  (-0.75,-3.375);
	\coordinate (D) at  (1.875,-0.95);
    \coordinate (E) at  (1,0);

	\draw [ultra thick] (A) to [out=180,in=70] (B);
	\draw [ultra thick] (B) to [out=250,in=180] (C);
	\draw [ultra thick] (C) to [out=0,in=270] (D);
	\draw [ultra thick] (D) to [out=90,in=0] (E);
    \draw [ultra thick, blue] (E) to [out=180,in=0] (A);
	
\node at (-2.25,-1.8) {$\Omega$};
\node at (-0.5,-1.79) {$N_\theta$};
\node at (0,-4.2) {(b) $\theta=0$ or $\pi$};
\node at (-0.6,0.3) {$\Gamma$};

\node at (0.2,0.27) {$\xi$};
\filldraw (0,0) [blue] circle (1.5pt);

\node at (-0.75,-3.65) {$p$};
\filldraw (-0.75,-3.375) [blue] circle (1.5pt);

\draw[thick, blue] (0,0) .. controls (-0.2,-0.3) and (-1.5,-2.5) .. (-0.75,-3.375);


\end{tikzpicture}
\end{minipage}
\caption{A picture of $N_\theta$ in Proposition \ref{prop3}}
\end{figure}
\end{proof}

\begin{lemma}\label{auxi'}
Under the assumptions of Theorem \ref{th2'}, suppose that $u_{\varepsilon,x_1x_2}(\mathbf{0})=0$, then $\mathfrak{R}_{x_2}(\mathbf{0})<0$.
\end{lemma}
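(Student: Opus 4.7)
The plan is to mirror the four-step proof of Lemma \ref{auxi}, with adjustments for the fact that the vanishing-curvature set $\Gamma$ is now a flat segment rather than an isolated point. The first move is to reduce the claim to a statement about a single third-order derivative of $u_\varepsilon$ at $\mathbf{0}$. Because $\mathbf{0}\in\Gamma$ and $u_\varepsilon\equiv 0$ on the flat piece $\Gamma$ of $\partial\Om$, every pure $x_1$-derivative of $u_\varepsilon$ vanishes on $\Gamma$; in particular $u_{\varepsilon,x_1}(\mathbf{0})=u_{\varepsilon,x_1x_1}(\mathbf{0})=0$. Combined with the hypothesis $u_{\varepsilon,x_1x_2}(\mathbf{0})=0$, differentiating the curvature formula for $\mathfrak{R}$ in $x_2$ collapses to $\mathfrak{R}_{x_2}(\mathbf{0})=-u_{\varepsilon,x_1x_1x_2}(\mathbf{0})/|u_{\varepsilon,x_2}(\mathbf{0})|$; since $u_{\varepsilon,x_2}(\mathbf{0})<0$ by Hopf's lemma, the whole task reduces to proving $u_{\varepsilon,x_1x_1x_2}(\mathbf{0})>0$.

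Step 1 is to show $u_{\varepsilon,x_1x_1x_2}(\mathbf{0})\neq 0$. From $u_\varepsilon\equiv 0$ on $\Gamma$ we immediately get $u_{\varepsilon,x_1x_1x_1}(\mathbf{0})=0$, and substituting all known zero derivatives into the linearized equation \ref{f1} with $\theta=0$ yields $u_{\varepsilon,x_1x_2x_2}(\mathbf{0})=0$, exactly as in Lemma \ref{auxi}. Assuming toward contradiction $u_{\varepsilon,x_1x_1x_2}(\mathbf{0})=0$, the leading Taylor term of $u_{\varepsilon,x_1}$ at $\mathbf{0}$ is a nonzero homogeneous polynomial of degree $\geq 3$ which, after the change of variables diagonalising $A(\mathbf{0})$, is harmonic; the identity $u_{\varepsilon,x_1}|_{\Gamma}\equiv 0$ forces it to be divisible by $x_2$, so it factorises into at least three real linear factors through $\mathbf{0}$, producing at least three branches of $\{u_{\varepsilon,x_1}=0\}$ at $\mathbf{0}$ and contradicting Proposition \ref{prop3}. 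Step 2 then parametrises the transversal branch: writing $u_{\varepsilon,x_1}(x_1,x_2)=x_2\,G(x_1,x_2)$, the previous identities translate into $G(\mathbf{0})=G_{x_2}(\mathbf{0})=0$ and $G_{x_1}(\mathbf{0})\neq 0$, so the implicit function theorem yields a $C^2$ curve $\mathcal{C}=\{(g(t),t):t\in[-\delta,0]\}$ with $g(0)=g'(0)=0$.

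Steps 3 and 4 then follow the structure of Lemma \ref{auxi}: one argues $u_{\varepsilon,x_1x_1}(g(t),t)\leq 0$ for $t\in[-\delta,0]$, so that $H(t):=u_{\varepsilon,x_1x_1}(g(t),t)$ satisfies $H(0)=0$ and $H\leq 0$; hence $H'(0)\geq 0$, and using $g'(0)=0$ together with $u_{\varepsilon,x_1x_1x_1}(\mathbf{0})=0$ the chain rule gives $H'(0)=u_{\varepsilon,x_1x_1x_2}(\mathbf{0})\geq 0$, which combined with Step 1 produces the strict positivity. The main obstacle is the sign analysis in Step 3. In Lemma \ref{auxi}, for a boundary point $\bar{x}\in\partial\Om$ slightly to the left of $\mathbf{0}$, the sign of $u_{\varepsilon,x_1}(\bar{x})$ was read off directly from the nonvanishing slope $\varphi'(\bar{x}_1)$; in our setting $\varphi\equiv 0$ on $\Gamma$, so $u_{\varepsilon,x_1}$ vanishes identically on $\Gamma$ and the first-order argument is unavailable. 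Instead one must exploit the information from Proposition \ref{prop3} that $u_{\varepsilon,x_1x_2}(q)\neq 0$ at every $q\in\Gamma\setminus\{\mathbf{0}\}$, which fixes the sign of $u_{\varepsilon,x_1}$ in one-sided neighbourhoods of $\Gamma$ via a second-order Taylor expansion, and then propagate that sign across the interior up to $\mathcal{C}$, keeping in mind that $\mathcal{C}$ is tangent to the $x_2$-axis at $\mathbf{0}$ because $g'(0)=0$.
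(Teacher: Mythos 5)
Your reduction to proving $u_{\varepsilon,x_1x_1x_2}(\mathbf{0})>0$, and your Step 1 (Taylor expansion, Bers-type leading harmonic polynomial, branch counting against Proposition \ref{prop3}) coincide with what the paper does; the divergence is in how the inequality $u_{\varepsilon,x_1x_1x_2}(\mathbf{0})\geq 0$ is obtained. You transplant Steps 2--4 of Lemma \ref{auxi}: factor $u_{\varepsilon,x_1}=x_2G$, parametrize the transversal branch $\mathcal{C}$ as $x_1=g(t)$ with $g'(0)=0$, show $u_{\varepsilon,x_1x_1}\leq 0$ along $\mathcal{C}$, and differentiate $H(t)=u_{\varepsilon,x_1x_1}(g(t),t)$ at $t=0$. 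The paper instead works entirely on $\Gamma$: it proves $u_{\varepsilon,x_1x_2}<0$ on $\Gamma^-$ and $u_{\varepsilon,x_1x_2}>0$ on $\Gamma^+$ (via the expansion $u_{\varepsilon,x_1}(x_1^-,x_2)=u_{\varepsilon,x_1x_2}(x_1^-,0)x_2+o(\cdot)$ near the endpoint $(-1,0)$, where $u_{\varepsilon,x_1}>0$ in $\Om$), so that $x_1\mapsto u_{\varepsilon,x_1x_2}(x_1,0)$ passes from negative to positive through its zero at $x_1=0$, and its derivative there, which is exactly $u_{\varepsilon,x_1x_1x_2}(\mathbf{0})$, is nonnegative; no interior branch, implicit function theorem, or function $H$ is needed.

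The genuine gap is in your Step 3. The only input you cite for the sign analysis is $u_{\varepsilon,x_1x_2}\neq 0$ on $\Gamma\setminus\{\mathbf{0}\}$; combined with the second-order Taylor expansion below $\Gamma^-$ this yields only that $u_{\varepsilon,x_1}$ has a \emph{constant} one-sided sign there, not \emph{which} sign, and the sign is the crux: if $u_{\varepsilon,x_1}<0$ to the left of $\mathcal{C}$, your argument would give $u_{\varepsilon,x_1x_1}\geq 0$ along $\mathcal{C}$ and hence $u_{\varepsilon,x_1x_1x_2}(\mathbf{0})\leq 0$, i.e.\ the opposite conclusion. The missing ingredient is precisely the one the paper uses implicitly: at boundary points of $\partial\Om\setminus\Gamma$ just beyond the endpoint $(-1,0)$ the exterior normal satisfies $\nu_{x_1}<0$, so $\nabla u_\varepsilon=\frac{\partial u_\varepsilon}{\partial\nu}\nu$ gives $u_{\varepsilon,x_1}>0$ there; since by Proposition \ref{prop3} the nodal set of $u_{\varepsilon,x_1}$ in $\Om$ is the single curve joining $\xi=\mathbf{0}$ to $p$, the component of $\Om$ lying to the left of that curve touches this boundary piece, hence $u_{\varepsilon,x_1}>0$ throughout it and in particular immediately to the left of $\mathcal{C}$. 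Once this is inserted, your Steps 3--4 close the argument (with the minor caveat that $g$ and $H$ are only $C^1$ for $u_\varepsilon\in C^3(\bar\Om)$, which suffices); as written, however, the proposal does not determine the sign on which the whole inequality rests.
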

\begin{proof}
We prove that $u_{\varepsilon,x_1x_1x_2}(\mathbf{0})>0$, then $\mathfrak{R}_{x_2}(\mathbf{0})=-\frac{u_{\varepsilon,x_1x_1x_2}(\mathbf{0})u_{\varepsilon,x_2}^2(\mathbf{0})}{|\nabla u_\varepsilon|^3}<0$. Indeed, {since $u_{\varepsilon,x_1x_2}$ is continuous and $u_{\varepsilon,x_1x_2}\neq0$  on $\Gamma\backslash\{\mathbf{0}\}$ by Proposition \ref{prop3}, we have that $u_{\varepsilon,x_1x_2}$ is either strictly positive or  negative
on $\Gamma^-=\{(x_1,x_2)\in \Gamma:x_1\in (-1,0)\}$. In a neighborhood of $(-1,0)$, consider the points $(x_1^-,0)\in \Gamma^-$ and $(x_1^-,x_2)\in \Om$, by the Taylor expansion, we obtain
\begin{equation*}
  u_{\varepsilon,x_1}(x_1^-,x_2)=u_{\varepsilon,x_1x_2}(x_1^-,0)x_2+o(x_1^-+x_2).
\end{equation*}
Since $u_{\varepsilon,x_1}(x_1^-,x_2)>0$ and $x_2<0$, we have $u_{\varepsilon,x_1x_2}(x_1^-,0)<0$. Thus $u_{\varepsilon,x_1x_2}$ is negative
on $\Gamma^-$. Similarly, for $\Gamma^+=\{(x_1,x_2)\in \Gamma:x_1\in (0,1)\}$, we can prove that, $u_{\varepsilon,x_1x_2}$ is positive
on $\Gamma^+$.
Then it follows $u_{\varepsilon,x_1x_1x_2}(\mathbf{0})\geq 0$}, but if equality holds, we can argue as the {Step 1} in the proof
of Lemma \ref{auxi} to get a contradiction.
\end{proof}

\noindent{\bf \emph{Proof of Theorem \ref{th4}}}\ \ \
Repeating the same arguments as the proof of Theorem \ref{th3}, we can complete the proof. The difference is that
$\mathfrak{R}(x_\rho)\leq 0$ implies $x_\rho \in \Om \cap \partial U_\rho$ or $x_\rho\in \Gamma_\rho$, where $\Gamma_\rho=\{(x_1,x_2)\in \Gamma:x_1\in (-1,-\rho)\cup (\rho,1)\}$. In the latter case, we have $\mathfrak{R}(x_\rho)= 0$, thus $t_\rho=1$, i.e., $T(x_\rho)=\mathbf{0}$. However, this is impossible, because $u_{\varepsilon,x_1}(x_\rho)=0$, $u_{\varepsilon,x_2}(x_\rho)<0$, $u_{\varepsilon,x_1x_1}(x_\rho)=0$, and $u_{\varepsilon,x_1x_2}(x_\rho)\neq0$.
\qed

\vspace{.3cm}

\noindent{\bf \emph{Proof of Theorem \ref{th2'}}}\ \ \
The proof is similar to the one of Theorem \ref{th1}, so we omit it here. \qed

\bibliographystyle{abbrv}

\end{document}